\newcommand{\RR}{\mathbf R}
\newcommand{\NN}{\mathbf N}
\renewcommand{\AA}{\mathbf A}
\theoremstyle{Theorem}
\newtheorem{thm}{Theorem}[section]
\newtheorem*{mainthm}{Theorem}
\newtheorem{cor}[thm]{Corollary}
\newtheorem{lem}[thm]{Lemma}
\theoremstyle{definition}
\newtheorem{dff}[thm]{Definition}
\newtheorem{xmp}[thm]{Example}
\newtheorem{rmk}[thm]{Remark}
\newtheorem{que}[thm]{Question}
\def\HK{\textnormal{HK}}
\def\fm{\mathfrak{m}}
\def\fn{\mathfrak{n}}
\def\cH{\mathcal{H}}
\newcommand\sIJe[1]{I^{\lceil sp^{#1} \rceil} + J^{[p^{#1}]}}
\newcommand{\vol}{\operatorname{vol}}
\newcommand{\Exp}{\operatorname{Exp}}
\newcommand{\Hull}{\operatorname{Hull}}
\colorlet{DG}{green!50!black}
\colorlet{DB}{blue!50!black}
\title{On lower bounds for $s$-multiplicities}
\author{Lance Edward Miller and William D.\ Taylor}
\begin{document}

\maketitle 

\begin{abstract}
A recent continuous family of multiplicity functions on local rings was introduced by Taylor interpolating between Hilbert-Samuel and Hilbert-Kunz multiplicities. The obvious goal is to use this as a  tool for deforming results from one to the other. The values in this family which do not match these classic variants however are not known yet to be well-behaved. This article explores lower bounds for these intermediate multiplicities as well as gives evidence for analogies of the Watanabe-Yoshida minimality conjectures for unmixed singular rings. 
\end{abstract}

\section{Introduction}

The Hilbert-Kunz multiplicity is among the most useful, subtle, and fascinating invariants of positive equicharacterisitc local rings. Since its discovery by Monsky, it has been a focal point of positive characteristic commutative algebra and algebraic geometry. It is well known to stand in stark contrast to the better behaved Hilbert-Samuel multiplicity. Recently, Taylor introduced a continuous family of multiplicities $e_s(I,J)$ for pairs of ideals $I$ and $J$ in such local rings, primary to the unique maximal ideal of a local ring, where $s > 0$ is any positive real number. For large $s$-values, this multiplicity agrees with the Hilbert-Kunz multiplicity of $J$ and for small $s$-values it agrees with the Hilbert-Samuel multiplicity of $I$. This makes it a tantalizing tool to deform between properties of Hilbert-Samuel and Hilbert-Kunz multiplicities. However, the values of $e_s(I,J)$ which lie in between these extremal behaviors are not at all well understood, even when $I = J$. For example, it is not even known that $e_s(\fm,\fm)$ is bounded in terms of the Hilbert-Samuel or Hilbert-Kunz multiplicities of the positive characteristic local ring $(R,\fm,k)$, or even that the function $s \mapsto e_s(\fm,\fm)$ is decreasing. Thus, expected theorems, such as the regularity criteria of Nagata or Watanabe-Yoshida are not present for all $s$-values.  

The purpose of this article is to establish bounds for $s$-multiplicities. We hope that such estimates will provide essential tools for future work and will at least illuminate the behavior of such multiplicities in the cases where they do not recover the more familiar Hilbert-Kunz or Hilbert-Samuel cases. We call these multiplicities {\it intermediate}. One might expect for each $\fm$-primary ideal $I$, $e_s(I,I) \in [e_{\HK}(I), e(I)]$. Note, if true, this would prove the regularity criteria. While we can't establish these facts, we give validation by showing the following.

\begin{mainthm}(Corollary~\ref{cor:constant})
For a local domain $(R,\fm,k)$ with residue field of characteristic $p > 0$, and $I$ an $\fm$-primary ideal, if $e(I) = e_{\HK}(I)$, then $e_s(I)$ is a constant function in $s$. 
\end{mainthm}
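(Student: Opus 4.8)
The plan is to realize $e_s(I):=e_s(I,I)$ as caught, for every $s>0$, between $e_{\HK}(I)$ below and $e(I)$ above; then the hypothesis $e(I)=e_{\HK}(I)$ collapses the interval $[e_{\HK}(I),e(I)]$ to a point and hence collapses the whole function $s\mapsto e_s(I)$.

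The lower bound $e_s(I)\ge e_{\HK}(I)$ for all real $s>0$ is precisely the main theorem, and this is where the domain hypothesis is spent: one compares $\ell\big(R/(I^{\lceil sq\rceil}+I^{[q]})\big)$ with $\ell\big(R/I^{[q]}\big)$ and controls the defect after normalizing by $q^{d}$, which — as with the classical Hilbert--Kunz lower-bound estimates — requires the unmixedness that a (complete) domain supplies.

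For the opposite inequality $e_s(I)\le e(I)$ I would proceed in cases. At the two ends there is nothing to prove: for $s$ below a threshold depending on $(R,I)$ Taylor's interpolation gives $e_s(I)=e(I)$ outright, and for $s$ above a threshold it gives $e_s(I)=e_{\HK}(I)$, so the inequality there is the classical $e_{\HK}(I)\le e(I)$. On the remaining intermediate range I would use the containment $I^{\lceil sq\rceil}\subseteq I^{\lceil sq\rceil}+I^{[q]}$, which bounds the $s$-colength above by $\ell\big(R/I^{\lceil sq\rceil}\big)$, and then reconcile this estimate with the normalization defining $e_s$; it is exactly here that $e(I)=e_{\HK}(I)$ is used, forcing the two asymptotic length functions $\ell(R/I^{n})\sim\tfrac{e(I)}{d!}\,n^{d}$ and $\ell(R/I^{[q]})\sim e_{\HK}(I)\,q^{d}$ to line up so that the passage to the limit is lossless. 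Combining, $e_{\HK}(I)\le e_s(I)\le e(I)=e_{\HK}(I)$ for every $s>0$, so $e_s(I)\equiv e(I)=e_{\HK}(I)$ and $s\mapsto e_s(I)$ is constant.

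The step I expect to be the genuine obstacle is this uniform-in-$s$ upper bound on the intermediate range: at the endpoints either the adic filtration $\{I^{n}\}$ or the Frobenius filtration $\{I^{[q]}\}$ dominates and the estimate is automatic, whereas for intermediate $s$ neither does, so the bound can only be forced by trading on the equality $e(I)=e_{\HK}(I)$ rather than the generic strict inequality $e_{\HK}(I)<e(I)$ — the lower bound itself being the substance of the cited main theorem, which is simply invoked here.
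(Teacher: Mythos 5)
Your plan hinges on a claimed lower bound $e_s(I)\geq e_{\HK}(I)$ for all $s>0$, which you assert is ``precisely the main theorem.'' It is not: Theorem~\ref{thm:MainLowerBound} gives the bound $e_s(I)\geq\bigl(\tfrac{\cH_t(d)-r\cH_{t-1}(d)}{\cH_s(d)}\bigr)e(I)$, a quantity that depends on the auxiliary parameters $r,t$ and is in general weaker than $e_{\HK}(I)$. Indeed the introduction states explicitly that the containment $e_s(I)\in[e_{\HK}(I),e(I)]$ is \emph{not} known and that even boundedness and monotonicity of $s\mapsto e_s(I)$ are open. So the lower leg of the sandwich you propose is precisely what is unavailable, and invoking it begs the question. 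The upper leg also does not survive scrutiny: the containment $I^{\lceil sq\rceil}\subseteq I^{\lceil sq\rceil}+I^{[q]}$ gives $h_s(I)\leq\tfrac{s^d}{d!}e(I)$, hence $e_s(I)\leq\tfrac{s^d}{d!\,\cH_s(d)}\,e(I)$; since $\cH_s(d)<\tfrac{s^d}{d!}$ for $s>1$, this bound exceeds $e(I)$ on the intermediate range, and your promised ``reconciliation via $e(I)=e_{\HK}(I)$'' is not an argument, just a hope. The correct upper bound $e_s(I)\leq e(I)$ does hold (Theorem~\ref{thm:parameterflat}), but it is proved by comparing with a minimal reduction, not by the crude containment.

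The paper's proof avoids both gaps by using the tight-closure characterization of $e(I)=e_{\HK}(I)$: one gets a parameter ideal $J$ with $J\subseteq I\subseteq J^{*}$, hence by monotonicity and tight-closure invariance of $e_s$, $e_s(J)\geq e_s(I)\geq e_s(J^{*})=e_s(J)$, forcing equality; then Theorem~\ref{thm:parameterflat} gives $e_s(J)=e(J)$, constant in $s$. This sidesteps entirely the need for a universal lower bound $e_s(I)\geq e_{\HK}(I)$, which you do not have. You should rework the argument along these lines rather than via a two-sided squeeze.
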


In general, it is not even clear from definitions that $e_s(R) \geq 1$! Despite this, we show a number of useful estimates which also establish the expected regularity criteria for all $s$ in dimension $2$. In search of more good behavior for intermediate multiplicities, we are inspired by general lower bounds similar to those arising in \cite{WY01}, \cite{WY05}. These papers heavily utilize an effective lower bound for Hilbert-Kunz multiplicity in terms of functions $\cH_s(d) :=  \sum_{i=0}^{ \lfloor s \rfloor} \frac{ (-1)^i}{d!} \binom{d}{i} (s-i)^d$. In the Cohen-Macaulay case we give an $s$-analogue of their lower bound. 

\begin{mainthm}(Theorem~\ref{thm:MainLowerBound})
Let $(R,\fm)$ be a $d$-dimensional Cohen-Macaulay local ring of characteristic $p>0$, let $I\subset R$ be an $\fm$-primary ideal, and let $J$ be a reduction of $I$ that is a parameter ideal.  For any $r\geq \mu(I/J^\ast)$ and $1\leq t \leq s$, 
\[e_s(I)\geq \left(\frac{\cH_t(d)-r\cH_{t-1}(d)}{\cH_s(d)}\right)e(I).\]
\end{mainthm}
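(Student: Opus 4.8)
\emph{Proof plan.} The plan is to turn the inequality into an asymptotic lower bound on the colengths $\lambda\bigl(R/(I^{\lceil uq\rceil}+I^{[q]})\bigr)$ and to assemble that bound from a short chain of ideal inclusions, the term $\cH_{u-1}(d)$ appearing as the normalized colength of a colon ideal. Since enlarging $r$ only weakens the conclusion, I may assume $r=\mu(I/J^\ast)$; fix $a_1,\dots,a_r\in I$ whose residues generate $(I+J^\ast)/J^\ast$, so that $I\subseteq J^\ast+(a_1,\dots,a_r)R$ and hence $I^{[q]}\subseteq (J^\ast)^{[q]}+(a_1^q,\dots,a_r^q)$ for every $q=p^e$. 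Recall that $\cH_s(d)\,e_s(I)=\lim_{q\to\infty}\lambda\bigl(R/(I^{\lceil sq\rceil}+I^{[q]})\bigr)/q^d$, and that $t\le s$ forces $I^{\lceil sq\rceil}+I^{[q]}\subseteq I^{\lceil tq\rceil}+I^{[q]}$. Thus it suffices to prove, for every real $u\ge 1$,
\[ \liminf_{q\to\infty}\ \frac{\lambda\bigl(R/(I^{\lceil uq\rceil}+I^{[q]})\bigr)}{q^d}\ \ge\ \bigl(\cH_u(d)-r\,\cH_{u-1}(d)\bigr)\,e(I); \]
specializing to $u=t$ and dividing by $\cH_s(d)$ then yields the claimed bound for $e_s(I)$.

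Fix $u\ge 1$ and let $c=r_J(I)$ be the reduction number, so $I^n=J^{n-c}I^c\subseteq J^{n-c}$ for $n\ge c$, and hence $I^{\lceil uq\rceil}\subseteq J^{\lceil uq\rceil-c}$ for $q\gg 0$. Put $\mathfrak{b}_q:=J^{\lceil uq\rceil-c}+(J^\ast)^{[q]}$; then $I^{\lceil uq\rceil}+I^{[q]}\subseteq\mathfrak{b}_q+(a_1^q,\dots,a_r^q)$, so iterating the exact sequence $R/(\mathfrak{c}':b)\xrightarrow{\,b\,}R/\mathfrak{c}'\to R/(\mathfrak{c}'+(b))\to 0$ gives
\[ \lambda\bigl(R/(I^{\lceil uq\rceil}+I^{[q]})\bigr)\ \ge\ \lambda(R/\mathfrak{b}_q)\ -\ \sum_{i=1}^{r}\lambda\bigl(R/(\mathfrak{b}_q:a_i^q)\bigr). \]
For the first term, tight closure preserves Hilbert--Kunz multiplicity (Hochster--Huneke), so $\lambda\bigl((J^\ast)^{[q]}/J^{[q]}\bigr)=\lambda(R/J^{[q]})-\lambda(R/(J^\ast)^{[q]})=o(q^d)$, hence $\lambda(R/\mathfrak{b}_q)=\lambda\bigl(R/(J^{\lceil uq\rceil-c}+J^{[q]})\bigr)-o(q^d)$; and since $J$ is a parameter ideal in a Cohen--Macaulay ring, a direct computation using $\operatorname{gr}_J\bigl(R/J^{[q]}\bigr)\cong(R/J)[T_1,\dots,T_d]/(T_1^q,\dots,T_d^q)$ (equivalently, the constancy of $e_\sigma$ on parameter ideals) gives $\lambda\bigl(R/(J^{N}+J^{[q]})\bigr)=\lambda(R/J)\cdot\#\{a\in\NN^d:a_\ell<q\ \forall\ell,\ |a|<N\}$, which for $N=\lceil uq\rceil-c$ equals $\cH_u(d)\,e(I)\,q^d+o(q^d)$ (the shift by $c$ is asymptotically negligible and $\lambda(R/J)=e(J)=e(I)$). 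For the $i$-th correction term, set $m_0:=\lceil uq\rceil-q$, which is $\ge 0$ since $u\ge 1$; then $a_i^q J^{m_0}\subseteq I^q I^{m_0}=I^{\lceil uq\rceil}\subseteq\mathfrak{b}_q$ and $a_i^q J^{[q]}\subseteq J^{[q]}\subseteq\mathfrak{b}_q$, so $J^{m_0}+J^{[q]}\subseteq(\mathfrak{b}_q:a_i^q)$ and therefore $\lambda\bigl(R/(\mathfrak{b}_q:a_i^q)\bigr)\le\lambda\bigl(R/(J^{m_0}+J^{[q]})\bigr)=\cH_{u-1}(d)\,e(I)\,q^d+o(q^d)$, using $m_0/q\to u-1$ in the same computation. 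Substituting, $\lambda\bigl(R/(I^{\lceil uq\rceil}+I^{[q]})\bigr)\ge\bigl(\cH_u(d)-r\,\cH_{u-1}(d)\bigr)e(I)\,q^d+o(q^d)$, as needed.

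I expect the delicate step to be the bookkeeping of the correction terms: although $a_i^q$ is far from lying in $(J^\ast)^{[q]}$, the colon $(\mathfrak{b}_q:a_i^q)$ already contains $J^{\lceil uq\rceil-q}+J^{[q]}$, so each additional generator costs only $\cH_{u-1}(d)\,e(I)\,q^d$ rather than $\cH_u(d)\,e(I)\,q^d$; this shift from $u$ to $u-1$ is exactly what keeps the estimate from degenerating to the (in general false) inequality $e_s(I)\ge e(I)$. The only external input of real weight is the invariance of Hilbert--Kunz multiplicity under tight closure, used to replace $(J^\ast)^{[q]}$ by $J^{[q]}$ modulo $o(q^d)$; everything else is manipulation of the containments above together with the known colength behavior of a parameter ideal.
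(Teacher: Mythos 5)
Your proof is correct, and the conclusion follows, but you organize the argument differently from the paper. The paper works with the \emph{difference} $h_s(J)-h_s(I)$ and bounds it from above by decomposing $\lambda\bigl((I^{\lceil tq\rceil}+I^{[q]})/(J^{\lceil sq\rceil}+J^{[q]})\bigr)$ additively along the telescoping chain
\[
I^{\lceil tq\rceil}+I^{[q]}\;\supseteq\;I^{\lceil tq\rceil}+(J^\ast)^{[q]}\;\supseteq\;J^{\lceil tq\rceil}+(J^\ast)^{[q]}\;\supseteq\;J^{\lceil tq\rceil}+J^{[q]}\;\supseteq\;J^{\lceil sq\rceil}+J^{[q]},
\]
with the first jump controlled by an annihilator argument (the module $\frac{I^{\lceil tq\rceil}+(J^\ast)^{[q]}+(x^q)}{I^{\lceil tq\rceil}+(J^\ast)^{[q]}}$ killed by $I^{\lceil (t-1)q\rceil}+(J^\ast)^{[q]}$, contributing $r\,\cH_{t-1}(d)e(I)q^d$), the middle two being $o(q^d)$ (reduction and tight-closure persistence of $e_{\HK}$), and the last giving $(\cH_s(d)-\cH_t(d))e(I)q^d$; then one solves for $e_s(I)$. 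You instead bound $\lambda\bigl(R/(I^{\lceil tq\rceil}+I^{[q]})\bigr)$ from below \emph{directly} via the containment $I^{\lceil tq\rceil}+I^{[q]}\subseteq \mathfrak{b}_q+(a_1^q,\dots,a_r^q)$ and an iterated colon-ideal/short-exact-sequence estimate, identifying the main and correction terms via the associated graded computation for the parameter ideal $J$ rather than via $h_{t-1}(I,J^\ast)=h_{t-1}(J)$. The underlying engine is the same in both proofs — each generator of $I/J^\ast$ costs $\cH_{t-1}(d)e(I)$ because of the index shift $\lceil tq\rceil\to\lceil tq\rceil-q$, and $(J^\ast)^{[q]}/J^{[q]}$ has $o(q^d)$ length — but your version is somewhat more elementary in flavor: it makes the reduction number explicit, avoids invoking $h_s$ for the pair $(I,J^\ast)$, and works entirely with concrete colength counts for the regular sequence generating $J$, at the mild cost of carrying the extra $(J^\ast)^{[q]}$ inside $\mathfrak{b}_q$ and disposing of it afterwards. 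Both routes buy the same theorem; the paper's is tighter to write, yours is closer to the combinatorics that produces $\cH_t(d)$ and $\cH_{t-1}(d)$.

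Two small remarks worth recording: the reduction $r=\mu(I/J^\ast)$ at the start is legitimate exactly because $\cH_{t-1}(d)\ge 0$, so enlarging $r$ only lowers the claimed bound; and when $t=1$ your $\cH_{t-1}(d)=\cH_0(d)=0$, so the correction vanishes as it should — this is consistent with $m_0=\lceil uq\rceil-q\ge 0$ requiring $u\ge 1$, which matches the hypothesis $t\ge 1$.
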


The motivation for Watanabe and Yoshida was to formulate and give evidence of a general conjecture of deep interest. The Watanabe-Yoshida conjecture  predicts that unmixed singular local rings have Hilbert-Kunz multiplicities that are universally bounded below by those of the degree two Fermat hypersurfaces. In particular, working over a fixed base field $k$ of positive characteristic, set $$R_d := k[[x_0,\ldots,x_d]]/(x_0^2 + \ldots + x_d^2).$$ Specifically, the conjecture predicts that any singular unmixed complete local ring $R$ of dimension $d \geq 1$ has $e_{\HK}(R) \geq e_{\HK}(R_d)$ and equality forces $R$ to be analytically isomorphic to $R_d$. This conjecture holds in odd characteristics for all rings of dimension $d \leq 6$, for complete intersections, and for rings of small multiplicity, \cite{ES05}, \cite{AE13}. Clearly the corresponding Hilbert-Samuel statement is true and so one might hope for an $s$-analogue of this conjecture. 

\

{\bf Question:} Is $e_s(R) \geq e_s(R_d)$ for all singular complete unmixed rings of dimension $d$ and all $s >  0$? 

\

As in the Hilbert-Kunz case, a positive answer to this question gives an immediate target class of rings to study. Utilizing our general lower bound, we establish that this holds for Cohen-Macaulay local rings of dimension at most $3$. The Cohen-Macaulay hypothesis here only manifests through the general lower bound; Theorem~\ref{thm:MainLowerBound}. 

\begin{mainthm}(Theorem~\ref{sWY dim<4})
Let $(R,\fm)$ be an unmixed singular Cohen-Macaulay local ring of prime characteristic $p>0$ and dimension $d$ at most 3. For any $s>0$, $e_s(R)\geq e_s(R_d)$.
\end{mainthm}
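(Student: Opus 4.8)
The plan is to prove the case $d \leq 3$ by reducing to the general lower bound (Theorem~\ref{thm:MainLowerBound}) and then doing a dimension-by-dimension analysis. First I would handle $d=1$ separately: here an unmixed singular Cohen-Macaulay ring is a one-dimensional domain (or its completion) with $e(\fm) \geq 2$, and $R_1 = k[[x_0,x_1]]/(x_0^2+x_1^2)$, which in odd characteristic factors as two lines and has $e_{\HK}(R_1)=e(R_1)=2$; by the Theorem at the top of the excerpt ($e(I)=e_{\HK}(I)$ forces $e_s$ constant) we get $e_s(R_1)=2$ for all $s$, while $e_s(R)\geq ?$ needs the general lower bound with $d=1$. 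For $d=2,3$, I would apply Theorem~\ref{thm:MainLowerBound} to $I=\fm$, choosing a minimal reduction $J$ that is a parameter ideal (possible after passing to an infinite residue field, which does not change any of the relevant multiplicities), so that $e(\fm)=e(J)$ and $r$ can be taken to be $\mu(\fm/J^\ast)$.

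The key steps in order: (i) reduce to $R$ complete with infinite residue field, and recall that $R$ singular unmixed Cohen-Macaulay of dimension $d\leq 3$ forces $e(\fm)\geq 2$ (a ring with multiplicity $1$ that is unmixed and formally equidimensional is regular); (ii) compute or bound $e_s(R_d)$ explicitly — since $R_d$ is a hypersurface, $e(R_d)=2$, and $e_{\HK}(R_d)$ is the known value (for $d\leq 3$ one uses the classical computations, e.g. $e_{\HK}(R_2)=3/2$, $e_{\HK}(R_3)=4/3$ in odd characteristic via the Watanabe-Yoshida / Han-Monsky formulas), and then $e_s(R_d)$ interpolates; I expect to derive a clean closed form for $e_s(R_d)$ using that $R_d$ is a quadric hypersurface so its $s$-multiplicity can be read off from the defining quadric via the $\cH_s$ functions; (iii) apply Theorem~\ref{thm:MainLowerBound} with $t$ chosen optimally in $[1,s]$ and $r$ as small as the structure allows (for $\fm$-primary $I=\fm$ in these low dimensions one gets a favorable bound on $\mu(\fm/J^\ast)$ in terms of $e(\fm)$, embedding dimension, etc.); (iv) combine: show that the resulting lower bound $\bigl((\cH_t(d)-r\cH_{t-1}(d))/\cH_s(d)\bigr)e(\fm)$ with $e(\fm)\geq 2$ already exceeds $e_s(R_d)$, reducing everything to an inequality among the explicit functions $\cH_t(d),\cH_{t-1}(d),\cH_s(d)$ for $d\in\{1,2,3\}$.

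The main obstacle I anticipate is step (iv): the inequality
\[
\frac{\cH_t(d)-r\,\cH_{t-1}(d)}{\cH_s(d)}\cdot e(\fm)\ \geq\ e_s(R_d)
\]
must be established uniformly in $s>0$, and the left side is only useful for a good choice of $t\in[1,s]$ — for large $s$ one wants $t$ moderate so that $\cH_t(d)-r\cH_{t-1}(d)$ stays positive and sizable while $\cH_s(d)$ grows, and for $s$ near its transition values the interpolation on the right side $e_s(R_d)$ is itself delicate. So the crux is a careful calculus estimate on the one-variable functions $\cH_\bullet(d)$ for $d=1,2,3$ (in particular controlling where $\cH_t(d)-r\cH_{t-1}(d)$ is maximized subject to $1\leq t\leq s$), together with pinning down $r=\mu(\fm/J^\ast)$: in dimension $2$ one has $\mu(\fm/J^\ast)\leq e(\fm)-1$ by standard Cohen-Macaulay reduction theory, and an analogous (slightly weaker) bound in dimension $3$, and these are exactly what make the numerics close. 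I would organize the write-up as: a lemma computing $e_s(R_d)$ for $d\leq 3$; a lemma bounding $\mu(\fm/J^\ast)$; and then the proof of the theorem as an optimization of $t$ feeding into Theorem~\ref{thm:MainLowerBound}.
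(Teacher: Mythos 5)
Your proposal follows essentially the same strategy as the paper: reduce to the Cohen--Macaulay parameter-ideal lower bound (Theorem~\ref{thm:MainLowerBound}) with $r=\mu(\fm/J^\ast)\leq e(\fm)-1$ from the remark preceding it, compute $e_s(R_d)$ explicitly for $d\leq 3$ (which the paper does via the Veronese formula for $d=2$ and the toric volume calculation for $d=3$), and then optimize the choice of $t\in[1,s]$ case-by-case on $s$ and on $e(\fm)$. The content the proposal leaves unexecuted --- the actual numerics in step (iv) --- is exactly what the paper's case analysis carries out, including the auxiliary observation (Lemma~\ref{small s inequality}, which is the special case $t=s$) for the small-$s$ regime and a clean choice of $t$ near $\frac{\sqrt{e+2}}{\sqrt{e+2}-1}$ for the large-$s$ regime in dimension $3$.
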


We also note that many results in the literature nearly immediately give more evidence. For example, the question has a positive answer in the complete intersection case, by a mild extension of lemmas in \cite{ES05}, where the bulk of the heavy lifting there manifests in changes of coordinates and inductions which are independent of the underlying length calculations and so adds further evidence to a positive answer of the main question. 

\

{\bf Acknowledgments:} We gratefully thank P. Mantero, M. Johnson, and J. McCullough for helpful discussions and Florian Enescu for reading a preliminary draft of the article. 

\section{A brief review of $s$-multiplicity}

We begin by recalling the $s$-multiplicity introduced in \cite{Tay}.  Throughout $s$ denotes a positive real number and $p$ denotes a positive prime integer. Fix a local ring $(R,\fm)$ of characteristic $p$ and two $\fm$-primary ideals $I$ and $J$. The limit of colengths $h_s(I,J) := \lim_{e\to\infty} \lambda( R / \sIJe{e} )/p^{ed}$ exists \cite[Thm. 2.1]{Tay}. It is related to both the Hilbert-Samuel and Hilbert-Kunz multiplicities of $I$ and $J$ respectively. In particular, for $s$ larger than the $F$-threshold $c_J(I)$, $h_s(I,J) = e_{\HK}(J)$ and for values of $s$ smaller than a  threshold of similar construction, $h_s(I,J) = \frac{s^d}{d!} e(I)$ \cite[Lem. 3.2]{Tay}. When $R$ is regular of dimension $d$, $\cH_s(d) := h_s(\fm,\fm) = \sum_{i=0}^{ \lfloor s \rfloor} \frac{ (-1)^i}{d!} \binom{d}{i} (s-i)^d$ offers a normalizing factor and one defines the $s$-multiplicity as follows.  

\begin{dff}
For a $d$-dimensional local ring $(R,\fm)$ of characteristic $p > 0$, the {\bf $s$-multiplicity} of $\fm$-primary ideals $I$ and $J$ is $e_s(I,J) := h_s(I,J)/\cH_s(d)$. 
\end{dff}

\noindent The $s$-multiplicity can be defined more generally for modules in the expected way and enjoys many of the usually expected properties. In the toric case, it has an interpretation as a Euclidean volume \cite[Lem. 5.3]{Tay} which will be  exploited in Example~\ref{xmp:R3}. We follow the usual conventions by writing $e_s(I) := e_s(I,I)$ and $e_s(R) := e_s(\fm)$.

It is unclear how the function $s \mapsto e_s(I,J)$ for fixed $I$ and $J$ behaves in general. It is always Lipschitz continuous \cite[Cor. 3.8(i)]{Tay} and takes the value $e(I)$ for small values of $s$ and $e_{\HK}(J)$ for large values of $s$. However, it is not known that this function is decreasing or even just bounded. While it is known that when $R$ is regular, $e_s(R) = 1$ for all $s$ \cite[Cor. 3.7]{Tay}, there is not yet a complete $s$-version of the Nagata-Watanabe-Yoshida regularity criteria for unmixed rings. 


\section{Estimations of $s$-multiplicity}

We start by calculating the $s$-multiplicity of a power of a maximal ideal in a regular local ring. 

\begin{lem}\label{lem:powermaxcomp}
Suppose $(B,\fn)$ is regular local with maximal ideal $\fn$. We have 
$$e_s(\fn^n) = \frac{ \sum_{a=0}^{n-1} \binom{a + d -1 }{ d-1} \cH_{sn-a}(d) } { \cH_s(d) }.$$
\end{lem}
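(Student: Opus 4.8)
The plan is to compute $h_s(\fn^n, \fn^n)$ directly from its definition as a limit of normalized colengths, and then divide by $\cH_s(d)$. The key observation is that, for a regular local ring $(B,\fn)$ of dimension $d$, the Frobenius power $(\fn^n)^{[p^e]} = \fn^{np^e}$, while the ordinary power $(\fn^n)^{\lceil sp^e\rceil} = \fn^{n\lceil sp^e \rceil}$. So the relevant ideal is $\fn^{n\lceil sp^e\rceil} + \fn^{np^e} = \fn^{\min(n\lceil sp^e\rceil,\, np^e)}$ when $s \le 1$, but in general (for $1 \le t \le s$ not relevant here, but for arbitrary $s$) it is $\fn^{n\min(\lceil sp^e\rceil,\, p^e)}$. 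Wait — more carefully, since we want all $s>0$: the ideal $I^{\lceil sp^e\rceil} + J^{[p^e]}$ with $I=J=\fn^n$ is $\fn^{n\lceil sp^e\rceil} + \fn^{np^e}$. This is just $\fn^{\min(n\lceil sp^e\rceil, np^e)}$; but that collapses the $s>1$ behavior, which can't be right for recovering Hilbert–Kunz. I would instead recall the precise filtration used in \cite{Tay}: the relevant object is the colength of $R$ modulo $I^{\lceil sp^e\rceil} + J^{[p^e]}$, and for this lemma one should track it degree by degree. Concretely, I would filter $B/\fn^{np^e}$ (the quotient by the Frobenius power) by powers of $\fn$, writing $\lambda(B/(\fn^{n\lceil sp^e\rceil} + \fn^{np^e}))$ as a sum over the graded pieces $\fn^j/\fn^{j+1}$ for $j$ from $0$ up to $\min(n\lceil sp^e\rceil, np^e)-1$, each of dimension $\binom{j+d-1}{d-1}$.

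First I would reduce to a residue-level count: since $B$ is regular, $\operatorname{gr}_\fn(B)$ is a polynomial ring in $d$ variables, so $\dim_k \fn^j/\fn^{j+1} = \binom{j+d-1}{d-1}$ for all $j \ge 0$. Then $\lambda(B/(\fn^a + \fn^b)) = \sum_{j=0}^{\min(a,b)-1}\binom{j+d-1}{d-1}$ for any $a,b$. Applying this with $a = n\lceil sp^e\rceil$ and $b = np^e$ gives an exact formula for the colength at level $e$. Next I would group the sum over $j$ into blocks of length $p^e$: write $j = ap^e + r$ with $0\le a \le n-1$ and $0\le r < p^e$ (the block index $a$ runs up to $n-1$ because $b = np^e$), so that $\binom{j+d-1}{d-1}$ is, up to lower-order terms in $p^e$, governed by $(ap^e+r)^{d-1}$. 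After dividing by $p^{ed}$ and passing to the limit $e\to\infty$, each inner sum over $r$ of the relevant $\fn$-adic lengths becomes, by recognizing a Riemann sum, precisely a copy of $\cH_{sn-a}(d)$ — here the shift $sn - a$ appears because within block $a$ the ``effective'' $s$-value for the $\lceil s\cdot\rceil$ cutoff at scale $np^e$ is $sn$, and we have already consumed $a$ full blocks. Matching this against the definition of $\cH$ is the crux, and I would verify it by comparing with the known special cases: $n=1$ should give $\cH_s(d)/\cH_s(d) = 1$, recovering \cite[Cor. 3.7]{Tay}, and large $s$ should recover $e_{\HK}(\fn^n) = n^d$.

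The main obstacle I anticipate is the bookkeeping in the block decomposition: making the identification of the limiting inner sum with $\cH_{sn-a}(d)$ rigorous requires care about the ceiling function $\lceil sp^e\rceil$ versus $sp^e$ (a difference that vanishes after normalization but must be controlled), about whether a given block $a$ is fully included, partially included, or excluded depending on the sign of $sn - a$ (blocks with $a \ge sn$ contribute nothing, matching $\cH_{sn-a}(d) = 0$ for $sn - a \le 0$ by the empty-sum convention), and about the lower-order terms in $\binom{j+d-1}{d-1} = \frac{j^{d-1}}{(d-1)!} + O(j^{d-2})$ which must be shown to contribute $o(p^{ed})$ after summation. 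A clean way to handle all of this uniformly is to pass to the volume/integral formulation: $h_s(\fn^n,\fn^n) = \operatorname{vol}\{x \in \RR_{\ge 0}^d : |x|_1 < \min(sn, n),\ |x|_1 \text{ counted against the Frobenius cutoff } n\}$ — more precisely I would set up the integral $\int$ over the simplex-like region cut out by the two conditions and then slice it by the hyperplanes $\lfloor |x|_1 \rfloor = a$ to produce the sum $\sum_{a=0}^{n-1}\binom{a+d-1}{d-1}\cH_{sn-a}(d)$ directly, sidestepping the discrete estimates entirely. Either route works; I would likely present the direct colength computation for self-containedness and relegate the volume interpretation to a remark.
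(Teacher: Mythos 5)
Your central claim that $(\fn^n)^{[p^e]} = \fn^{np^e}$ is false whenever $d\geq 2$: for $\fn=(x_1,\dots,x_d)$ one has $\fn^{[q]}=(x_1^q,\dots,x_d^q)$, which is strictly smaller than $\fn^q$ (for instance $x_1^{q-1}x_2\in\fn^q\setminus\fn^{[q]}$), and likewise $(\fn^n)^{[q]}=(\fn^{[q]})^n\subsetneq\fn^{nq}$. You did sense a problem --- ``that collapses the $s>1$ behavior'' --- but the repair you adopt still substitutes $\fn^{np^e}$ for $(\fn^n)^{[p^e]}$, so the colength you compute, $\lambda(B/(\fn^{n\lceil sq\rceil}+\fn^{nq}))$, is that of the wrong ideal; its normalized limit is $\min(sn,n)^d/d!$, which already disagrees with the target at $n=1$, $s>1$ (where the answer should be $\cH_s(d)>1/d!$). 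The ``clean'' volume route at the end inherits the same error: the Frobenius constraint on a monomial exponent $c\in\ZZ_{\geq 0}^d$ is $\sum_i\lfloor c_i/q\rfloor<n$, which after rescaling by $q$ becomes the box-shaped condition $\sum_i\lfloor u_i\rfloor<n$, not a simplex condition on $\|u\|_1$; and slicing by level sets of $\lfloor\|x\|_1\rfloor$ (a scalar floor) is not the decomposition that produces the $\binom{a+d-1}{d-1}\cH_{sn-a}(d)$ terms.

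The paper's proof goes through the toric volume formula of \cite[Thm.\ 5.4]{Tay}: after completing, $h_s(\fn^n)=\vol(U)$ with $U=\{u\in\RR_{\geq 0}^d:\sum_i u_i<sn,\ \sum_i\lfloor u_i\rfloor<n\}$, and the decomposition is into translated unit cubes $x+[0,1)^d$ for $x\in\ZZ_{\geq 0}^d$ with $\|x\|_1=a\leq n-1$; there are $\binom{a+d-1}{d-1}$ such cubes at level $a$ and each contributes $\vol\{y\in[0,1)^d:\|y\|_1<sn-a\}=\cH_{sn-a}(d)$. You could redo this elementarily by counting monomials with $\sum_i c_i<n\lceil sq\rceil$ and $\sum_i\lfloor c_i/q\rfloor<n$ and passing to the limit, but the coordinatewise floor condition on the Frobenius side is the indispensable ingredient your proposal is missing, and once you restore it the block decomposition becomes exactly the paper's cube decomposition rather than your scalar-floor slicing.
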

\begin{proof}
We may complete and hence assume that $B=k[[x_1,\ldots,x_d]]$ and $\fn=(x_1,\ldots, x_d)$.  As $B$ is toric, \cite[Thm 5.4]{Tay} gives $e_s(\fn^n)=\vol(U)/\cH_s(d)$, where
\[U=\left\{(u_1,\ldots,u_d)\in \mathbf{R}_{\geq 0}^d\;|\;\sum_{i=1}^n u_i< sn, \sum_{i=1}^n \lfloor u_i\rfloor <n\right\}\]
By direct computation
\begin{align*}
\vol(U)  =\sum_{x\in\mathbf{Z}^d_{\geq 0}}\vol(U\cap(x+[0,1)^d))
=\sum_{a=0}^{n-1}\sum_{\stackrel{x\in\mathbf{Z}^d_{\geq 0},}{\|x\|_1=a}}\vol\left(\left\{y\in [0,1)^d\;|\;\|y\|_1<sn-a\right\}\right)=\sum_{a=0}^{n-1} \binom{a + d -1 }{ d-1} \cH_{sn-a}(d).
\end{align*}
\end{proof}

\begin{thm} Suppose $(B,\fn)$ is regular local with maximal ideal $\fn$.
For $s > 1$, $$\lim_{n \to \infty} \frac{e_s(\fn^n)}{n^d} = \frac{1}{d! \cH_s(d)}.$$ 
\end{thm}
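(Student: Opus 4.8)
The plan is to start from the closed-form expression supplied by Lemma~\ref{lem:powermaxcomp}, namely
\[
\frac{e_s(\fn^n)}{n^d} = \frac{1}{n^d\,\cH_s(d)}\sum_{a=0}^{n-1}\binom{a+d-1}{d-1}\,\cH_{sn-a}(d),
\]
and show that the sum on the right behaves asymptotically like $\tfrac{1}{d}\cdot\tfrac{n^{2d}}{(d-1)!\cdot d!}\cdot\text{(something)}$ --- more precisely that $\frac{1}{n^d}\sum_{a=0}^{n-1}\binom{a+d-1}{d-1}\cH_{sn-a}(d)\to \frac{1}{d!}$ as $n\to\infty$. The key observation is that $\binom{a+d-1}{d-1}=\frac{a^{d-1}}{(d-1)!}+O(a^{d-2})$, so the sum is a Riemann-type sum: writing $a=\alpha n$ with $\alpha\in[0,1)$, the binomial contributes $\frac{(\alpha n)^{d-1}}{(d-1)!}$ and $\cH_{sn-a}(d)=\cH_{(s-\alpha)n}(d)$. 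Thus I need the asymptotics of $\cH_{m}(d)$ as $m\to\infty$.

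The second ingredient is the elementary fact that $\cH_m(d)\to \tfrac{1}{d!}$ as the real parameter $m\to\infty$ (and in fact $\cH_m(d)=\tfrac{1}{d!}$ exactly once $m\ge d$, since $\sum_{i=0}^{d}(-1)^i\binom{d}{i}(m-i)^d = d!$ is the $d$-th finite difference of the degree-$d$ polynomial $x\mapsto x^d$; and for $m\ge d$ the floor $\lfloor m\rfloor\ge d$ so all $d+1$ terms with nonzero binomial coefficient appear, while any further terms vanish). Since $s>1$ is fixed, for all sufficiently large $n$ and every $0\le a\le n-1$ we have $sn-a > (s-1)n \ge d$, so in fact $\cH_{sn-a}(d)=\tfrac{1}{d!}$ identically in the entire range of summation once $n$ is large. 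This collapses the problem: for large $n$,
\[
\frac{1}{n^d}\sum_{a=0}^{n-1}\binom{a+d-1}{d-1}\cH_{sn-a}(d) = \frac{1}{d!}\cdot\frac{1}{n^d}\sum_{a=0}^{n-1}\binom{a+d-1}{d-1} = \frac{1}{d!}\cdot\frac{1}{n^d}\binom{n+d-1}{d},
\]
using the hockey-stick identity $\sum_{a=0}^{n-1}\binom{a+d-1}{d-1}=\binom{n+d-1}{d}$. Finally $\binom{n+d-1}{d}=\frac{n^d}{d!}+O(n^{d-1})$, so $\frac{1}{n^d}\binom{n+d-1}{d}\to\frac{1}{d!}$, and dividing by $\cH_s(d)$ gives the claimed limit $\frac{1}{d!\,\cH_s(d)}$.

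So the structure of the proof is: (1) invoke Lemma~\ref{lem:powermaxcomp}; (2) record that $\cH_m(d)=1/d!$ for all real $m\ge d$, via the finite-difference identity for $x^d$; (3) use $s>1$ to guarantee $sn-a\ge(s-1)n\ge d$ throughout the sum for $n\gg 0$, pulling the constant $1/d!$ out; (4) apply the hockey-stick identity and the polynomial asymptotics of $\binom{n+d-1}{d}$; (5) conclude. The only place requiring genuine care --- and the step I expect to be the main obstacle --- is step (2)/(3): one must be careful about the boundary behavior of $\cH_m(d)$ for $m$ near $d$ and about whether the floor function in the upper limit of the defining sum for $\cH$ causes any discrepancy, but since the hypothesis $s>1$ forces $sn-a$ to grow linearly and hence eventually exceed $d$ uniformly in $a$, this is exactly what makes the hypothesis $s>1$ (rather than $s>0$) necessary and the argument clean. (If one only assumed $s>0$, the terms with $a$ close to $sn$ would have $\cH_{sn-a}(d)$ genuinely varying, and one would instead get a Riemann integral $\int_0^{\min(s,1)}\cdots$, changing the answer; it is worth a remark that the stated hypothesis $s>1$ is what trivializes this.)
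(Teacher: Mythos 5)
Your approach is exactly the paper's: invoke Lemma~\ref{lem:powermaxcomp}, use $s>1$ to force $sn-a\geq d$ uniformly once $n$ is large so that $\cH_{sn-a}(d)$ becomes constant, apply the hockey-stick identity, and take the limit. However, there is a concrete arithmetical slip in your step (2) that propagates: you assert $\cH_m(d)=\tfrac{1}{d!}$ for $m\geq d$, but the correct value is $\cH_m(d)=1$. Indeed, by the paper's definition $\cH_m(d)=\tfrac{1}{d!}\sum_{i=0}^{\lfloor m\rfloor}(-1)^i\binom{d}{i}(m-i)^d$, and for $m\geq d$ your finite-difference identity gives the inner sum the value $d!$, so the prefactor $\tfrac{1}{d!}$ cancels it to $1$. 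Consequently your ``collapse'' display
\[
\frac{1}{n^d}\sum_{a=0}^{n-1}\binom{a+d-1}{d-1}\cH_{sn-a}(d) = \frac{1}{d!}\cdot\frac{1}{n^d}\binom{n+d-1}{d}
\]
has a spurious factor $\tfrac{1}{d!}$; taken literally this chain would yield the limit $\tfrac{1}{(d!)^2\cH_s(d)}$, not the stated $\tfrac{1}{d!\cH_s(d)}$. You silently drop the extra factor in the last line, which is why the final answer comes out right. Replace ``$\cH_m(d)=\tfrac{1}{d!}$ for $m\geq d$'' with ``$\cH_m(d)=1$ for $m\geq d$'' and the argument is correct and matches the paper's proof verbatim.
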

\begin{proof}
If $n\geq \frac{d-1}{s-1}$, then for any $a\leq n-1$ we have that $sn-a\geq sn-n+1\geq d$, and so $\cH_{sn-a}(d)=1$.  Therefore, by Lemma \ref{lem:powermaxcomp},
\[\lim_{n \to \infty} \frac{e_s(\fn^n)}{n^d}=\lim_{n\to\infty}\frac{ \sum_{a=0}^{n-1} \binom{a + d -1 }{ d-1} } {n^d \cH_s(d) }=\lim_{n\to\infty}\frac{\binom{n + d -1 }{ d} } {n^d \cH_s(d) }=\frac{1}{d!\cH_s(d)}.\qedhere\]
\end{proof}

From this we show that $e_s(I)$ always has expected upper and lower bounds available for Hilbert-Kunz multiplicity and can establish the regularity criteria in dimension $2$.

\begin{lem}\label{lem:lowerestimate}
Let $(R,\fm,k)$ be a local ring of characteristic $p$ and $I$ an $\fm$-primary ideal 
\begin{enumerate}
\item $e_s(I) \geq \frac{ e(I)}{d!}$, 
\item If $1 < s < d$, then $e_s(I) > \frac{ e(I)}{d!}$,
\item When $R$ is singular, unmixed, and of dimension $2$, then $e_s(R) > 1$. 
\end{enumerate}
\end{lem}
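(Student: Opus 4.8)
The plan is to prove the three statements in sequence, deriving each from more elementary estimates. For part (1), the key observation is that for any $\fm$-primary ideal $I$ and any $e$, the ideal $I^{\lceil sp^e\rceil} + I^{[p^e]}$ is contained in $I^{\lceil sp^e\rceil}$, so $\lambda(R/(I^{\lceil sp^e\rceil}+I^{[p^e]})) \geq \lambda(R/I^{\lceil sp^e\rceil})$. Dividing by $p^{ed}$ and passing to the limit, the right-hand side tends to $\frac{s^d}{d!}e(I)$ by the asymptotics of the Hilbert-Samuel function, so $h_s(I,I) \geq \frac{s^d}{d!}e(I)$. Then
\[
e_s(I) = \frac{h_s(I,I)}{\cH_s(d)} \geq \frac{s^d}{d!\,\cH_s(d)}e(I) \geq \frac{e(I)}{d!},
\]
where the last inequality is the elementary fact that $\cH_s(d) \leq \frac{s^d}{d!}$ (the full alternating sum defining $\cH_s(d)$ is bounded above by its leading term $\frac{s^d}{d!}$, since one is subtracting a nonnegative quantity — this should be noted or cited). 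The strict inequality in part (2) follows from the same chain of inequalities, provided one of them is strict: when $1 < s < d$ we have $\cH_s(d) < \frac{s^d}{d!}$ strictly, because the subtracted term $\frac{1}{d!}\binom{d}{1}(s-1)^d = \frac{d}{d!}(s-1)^d$ is strictly positive (as $s > 1$) and is a genuine contribution (as $s < d$ means... actually $\lfloor s\rfloor \geq 1$ requires $s \geq 1$, which holds). So $\frac{s^d}{d!\,\cH_s(d)} > 1$ and hence $e_s(I) > \frac{e(I)}{d!}$.

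For part (3), the hypotheses are that $R$ is singular, unmixed, and two-dimensional; I want $e_s(R) > 1$ for all $s > 0$. The natural approach is to split on the size of $s$. For $s$ large (beyond the relevant $F$-threshold) we have $e_s(R) = e_{\HK}(R)$, and for an unmixed singular local ring of dimension $2$ one has $e_{\HK}(R) > 1$ by the Nagata-type regularity criterion for Hilbert-Kunz multiplicity (this is classical — see the Watanabe-Yoshida circle of results); similarly for $s$ small, $e_s(R) = e(R) \geq 2 > 1$ since $R$ is singular (the Hilbert-Samuel multiplicity of an unmixed singular ring is at least $2$ by Nagata). The remaining, genuinely intermediate range of $s$ is the crux. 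Here I would use part (1)/(2) together with the Hilbert-Samuel bound: $e_s(R) \geq \frac{s^2}{d!\,\cH_s(2)}e(R) = \frac{s^2}{2\,\cH_s(2)}e(R)$, and since $e(R) \geq 2$ this gives $e_s(R) \geq \frac{s^2}{\cH_s(2)}$. For $d = 2$ one computes $\cH_s(2)$ piecewise: when $0 < s \leq 1$, $\cH_s(2) = \frac{s^2}{2}$, giving $e_s(R) \geq 2$; when $1 \leq s \leq 2$, $\cH_s(2) = \frac{s^2}{2} - (s-1)^2 = -\frac{s^2}{2} + 2s - 1$, and one checks directly that $\frac{s^2}{-\frac{s^2}{2}+2s-1} > 1$ on this interval (equivalently $\frac{3}{2}s^2 - 2s + 1 > 0$, which has negative discriminant $4 - 6 < 0$, so it is strictly positive everywhere); when $s \geq 2$, $\cH_s(2) = 1$ and $e_s(R) \geq s^2 \geq 4 > 1$. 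Combining the three ranges covers all $s > 0$.

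The main obstacle I anticipate is part (3), and specifically making sure the three pieces of the argument genuinely overlap and leave no gap in $s$ — the bound $e_s(R) \geq \frac{s^2}{\cH_s(2)}$ coming from the Hilbert-Samuel side is clean, but it relies on $e(R) \geq 2$, which in turn needs the unmixed hypothesis (Nagata's theorem: a formally unmixed local ring with multiplicity one is regular). One must be careful that "singular" plus "unmixed" is exactly what licenses $e(R) \geq 2$; without unmixedness the multiplicity-one rings need not be regular. A secondary point requiring care is the strictness in part (2): one should verify that for $1 < s < d$ the floor $\lfloor s \rfloor$ is at least $1$ so that the $i=1$ term actually appears in the sum defining $\cH_s(d)$, making the strict inequality $\cH_s(d) < s^d/d!$ valid; this is automatic since $s > 1$.
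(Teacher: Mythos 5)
Your proof of parts (1)--(2) rests on a false containment. You assert that $I^{\lceil sp^e\rceil} + I^{[p^e]} \subseteq I^{\lceil sp^e\rceil}$, but the inclusion runs the other way --- $I^{\lceil sp^e\rceil}$ is one of the summands --- so in fact $\lambda(R/(I^{\lceil sp^e\rceil}+I^{[p^e]})) \leq \lambda(R/I^{\lceil sp^e\rceil})$, not $\geq$. The inequality $h_s(I,I) \geq \frac{s^d}{d!}e(I)$ you draw from it is therefore the reverse of the truth, and your conclusion $e_s(I) \geq \frac{s^d}{d!\,\cH_s(d)}e(I)$ is visibly false for large $s$: there $\cH_s(d)=1$ while $\frac{s^d}{d!}$ grows without bound, yet $e_s(I)$ stabilizes at $e_{\HK}(I)$, which is finite and typically smaller than $e(I)$.

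The containment you want --- and the one the paper uses --- is $I^{\lceil sp^e\rceil}+I^{[p^e]}\subseteq I^{p^e}$, valid for $s\geq 1$ since $\lceil sp^e\rceil\geq p^e$ and $I^{[p^e]}\subseteq I^{p^e}$. This gives $h_s(I,I)\geq e(I)/d!$, after which $\cH_s(d)\leq 1$ (not $\cH_s(d)\leq s^d/d!$) finishes (1), and the strict inequality $\cH_s(d)<1$ for $1<s<d$ gives (2); for $s\leq 1$ one has $e_s(I)=e(I)$ outright. The same error sinks your treatment of the intermediate range in part (3), which uses $e_s(R)\geq s^2/\cH_s(2)$. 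With the corrected bound $e_s(R)\geq e(R)/(2\cH_s(2))\geq 1/\cH_s(2)$ you obtain strict inequality only for $1<s<2$; for $s\geq 2$ the paper instead appeals to the Watanabe--Yoshida theorem, whereas your appeal to $e_s=e_{\HK}$ only covers $s$ beyond the $F$-threshold $c_\fm(\fm)$, which may exceed $2$.
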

\begin{proof}
The first two claims follow as the containment $I^{\lceil sp^e \rceil} + I^{[p^e]} \subset I^{p^e}$ holds for $s > 1$ and this forces $$e_s(I) \geq \lim_{e \to \infty} \frac{ \lambda(R/ (I^{\lceil sp^e \rceil} + I^{[p^e]})) }{ \cH_s(d)p^{ed} } \geq \lim_{e \to \infty} \frac{ \lambda( R/I^{p^e})}{ \cH_s(d) p^{ed}} = \frac{ e(I)}{ \cH_s(d) d!}.$$ Using this, the first claim holds as  $\cH_s(d) \leq 1$ for all $s$ and the second claim holds as $\mathcal{H}_s(d) < 1$ for $s < d$. The third claim follows from the first two using Nagata's theorem \cite[Thm. 40.6]{Nag62} for $s \leq 1$, claim (2) for $1 < s < d$, and Watanabe and Yoshida's theorem \cite[Thm. 1.5]{WY00} for $s \geq d$. 
\end{proof}

We next generalize a result of Watanabe and Yoshida. Notably, this holds without an numixed hypothesis.  

\begin{thm}$($ c.f., \cite[Lem. 1.3]{WY01}$)$\label{thm:parameterflat}
Fix $(R,\fm)$ a local $d$-dimensional ring that is either a domain or Cohen-Macaulay. For $J$ a parameter ideal and $n\in \NN$, $$e_s(J^n) =  \frac{ \sum_{a=0}^{n-1} \binom{a + d -1 }{ d-1} \cH_{sn-a}(d) } { \cH_s(d) } e(J).$$  If $R/\fm$ is infinite and $I$ is an $\fm$-primary ideal, then for any $s>0$ and $n\in \NN$, we have \begin{equation}\label{eq1}\frac{e(I^n)}{d!} \leq e_s(I^n) \leq \frac{ \sum_{a=0}^{n-1} \binom{a + d -1 }{ d-1} \cH_{sn-a}(d)}{n^d \cH_s(d)} e(I^n).\end{equation} In particular, $\frac{e(I)}{d!} \leq e_s(I) \leq e(I)$ for all $s$. 
\end{thm}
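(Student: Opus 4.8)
The plan is to prove the three parts of Theorem~\ref{thm:parameterflat} in sequence, using Lemma~\ref{lem:powermaxcomp} as the engine and reducing everything else to it.

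\textbf{Step 1: the parameter ideal formula.} For a parameter ideal $J$, the idea is to pass from $R$ to a regular local ring by a flatness/deformation argument. If $R$ is Cohen--Macaulay, then $R$ is flat over the subring $A := k'[[J]]$ generated by a minimal generating sequence of $J$ (after possibly enlarging the residue field, or over a coefficient field), and $A$ is a power series ring in $d$ variables with $JA = \fn_A$ its maximal ideal; the colengths $\lambda_R(R/(J^n)^{\lceil sp^e\rceil} + (J^n)^{[p^e]})$ are then $\lambda_A$ of the corresponding $A$-module tensored up, multiplied by the constant $e(J) = \lambda_R(R/J)$ coming from flatness (the fiber has finite length $e(J)$). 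Dividing by $\cH_s(d)p^{ed}$ and taking limits gives $e_s(J^n) = e_s(\fn_A^n)\cdot e(J)$, and Lemma~\ref{lem:powermaxcomp} evaluates $e_s(\fn_A^n)$. In the domain case one instead uses that $R$ is module-finite over $A$ (Noether normalization of the completion), generically free of rank $e(J)$, and the same limit computation goes through because $h_s$ is additive on short exact sequences up to lower-order terms and the torsion contributes $0$ to the limit; this is the step where I expect to lean on the standard fact (already implicit in \cite{Tay}) that $h_s$ of a finite-length or lower-dimensional module vanishes in the limit.

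\textbf{Step 2: the reduction to a parameter ideal.} Now let $I$ be $\fm$-primary with $R/\fm$ infinite, so $I$ has a minimal reduction $J$ which is a parameter ideal with $e(J) = e(I)$ and $J \subset I$, hence $J^n \subset I^n$, giving the containment $(I^n)^{\lceil sp^e\rceil} + (I^n)^{[p^e]} \supseteq (J^n)^{\lceil sp^e\rceil} + (J^n)^{[p^e]}$ for each $e$, so $h_s(I^n) \le h_s(J^n)$ and therefore
\[
e_s(I^n) \;\le\; e_s(J^n) \;=\; \frac{\sum_{a=0}^{n-1}\binom{a+d-1}{d-1}\cH_{sn-a}(d)}{\cH_s(d)}\,e(I)
\;=\;\frac{\sum_{a=0}^{n-1}\binom{a+d-1}{d-1}\cH_{sn-a}(d)}{n^d\cH_s(d)}\,e(I^n),
\]
using $e(I^n) = n^d e(I)$. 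The lower bound $e(I^n)/d! \le e_s(I^n)$ is just Lemma~\ref{lem:lowerestimate}(1) applied to the ideal $I^n$. For the ``in particular'' clause, take $n=1$: the lower bound is $e(I)/d!$, which one upgrades to $e(I)/(d!\,\cH_s(d)) \le e_s(I)$ and then to $e_s(I)\le e(I)$ directly from the upper bound, since for $n=1$ the sum has the single term $a=0$, namely $\binom{d-1}{d-1}\cH_s(d) = \cH_s(d)$, so the coefficient is exactly $1$. The infinite-residue-field hypothesis is removed by the usual $R \to R[t]_{\fm R[t]}$ base change, under which both $e_s$ and $e$ are unchanged.

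\textbf{Main obstacle.} The genuinely delicate point is Step~1 in the non-Cohen--Macaulay (pure domain) case: one must be sure that replacing $R$ by the module-finite regular subring $A$ only changes $h_s$ by the multiplicative factor $e(J) = \mathrm{rank}_A R$ and that the torsion/lower-dimensional discrepancy is negligible in the $p^{ed}$-normalized limit. In the Cohen--Macaulay case flatness makes this transparent, but in the domain case it requires the module version of $s$-multiplicity from \cite{Tay} together with vanishing of $h_s$ on modules of dimension $<d$. Everything else is bookkeeping with the explicit $\cH$-functions and the elementary containments of ideals.
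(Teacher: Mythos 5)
Your proof is correct and follows essentially the same route as the paper: the parameter-ideal formula is obtained by passing to the regular subring $S = k[[a_1,\ldots,a_d]]$ and invoking Lemma~\ref{lem:powermaxcomp}, with flatness (\cite[Prop. 3.12]{Tay}) in the Cohen--Macaulay case and module-finiteness plus $e(J) = \mathrm{rank}_S(R)$ (\cite[Prop. 3.11]{Tay}, \cite[Thm. 11.2.7]{HS06}) in the domain case, after which a minimal reduction together with Lemma~\ref{lem:lowerestimate} gives the inequality. The ``main obstacle'' you flag in the domain case is precisely what \cite[Prop. 3.11]{Tay} packages up, so your instinct to lean on \cite{Tay} there is exactly the paper's move.
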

\begin{proof}
Note that \cite[Lem. 2.3]{WY05}, in our notation states for a parameter ideal $J$, $e_s(J) = e(J)$. Let $a_1,\ldots, a_d$ be a system of parameters generating $J$, and let $S=k[[a_1,\ldots, a_d]]\subseteq R$.  If $R$ is Cohen-Macaulay, then by \cite[Prop 3.12]{Tay} and the calculation in the proof of Lemma~\ref{lem:powermaxcomp},
\[e_s^R(J^n)= e_s^S((a_1,\ldots ,a_d)^n)\lambda(R/J)= \frac{ \sum_{a=0}^{n-1} \binom{a + d -1 }{ d-1} \cH_{sn-a}(d) } {  \cH_s(d) }e(J).\]
If $R$ is a domain then we can use \cite[Prop 3.11]{Tay} and \cite[Theorem 11.2.7]{HS06} to conclude that
\[e_s^R(J^n)=e_s^S((a_1,\ldots, a_d)^n)\mathrm{rank}_S(R)=\frac{ \sum_{a=0}^{n-1} \binom{a + d -1 }{ d-1} \cH_{sn-a}(d) } {  \cH_s(d) }e(J).\]
Equation~\eqref{eq1} follows from Lemma~\ref{lem:lowerestimate} and the first claim. 
In particular, taking $J \subset I$ a minimal reduction,
$$e_s(I^n) \leq e_s(J^n)  =\frac{ \sum_{a=0}^{n-1} \binom{a + d -1 }{ d-1} \cH_{sn-a}(d) } {  \cH_s(d) } e(J) = \frac{ \sum_{a=0}^{n-1} \binom{a + d -1 }{ d-1} \cH_{sn-a}(d) } {  \cH_s(d) }e(I) = \frac{ \sum_{a=0}^{n-1} \binom{a + d -1 }{ d-1} \cH_{sn-a}(d) } {  n^d\cH_s(d) } e(I^n).$$

\end{proof}

Immediately, we obtain the first main result of the paper.

\begin{cor}$($ c.f., \cite[Thm. 1.8]{WY01}$)$\label{cor:constant}
Suppose $(R,\fm)$ is a complete local domain of dimension at least $2$. If $I$ is an $\fm$-primary ideal, then $e_s(I)$ is constant if and only if $e_{\HK}(I) = e(I)$. 
\end{cor}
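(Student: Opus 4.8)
We treat the two directions separately; the forward one is formal and the reverse one carries the content.

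Suppose $s\mapsto e_s(I)$ is constant. For $s$ small enough $e_s(I)=e(I)$, since then $h_s(I,I)=\tfrac{s^d}{d!}e(I)$ and $\cH_s(d)=\tfrac{s^d}{d!}$; for $s$ large enough $e_s(I)=e_{\HK}(I)$, since then $h_s(I,I)=e_{\HK}(I)$ and $\cH_s(d)=1$ (all by \cite[Lem. 3.2]{Tay}). A constant function meeting both values must have $e(I)=e_{\HK}(I)$.

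Conversely assume $e(I)=e_{\HK}(I)$; we show $e_s(I)=e(I)$ for all $s>0$. Replacing $R$ by $R':=R[x]_{\fm R[x]}$ and $I$ by $IR'$ — which is again a local domain and leaves $e_s$, $e$, $e_{\HK}$ unchanged — we may assume $R/\fm$ is infinite. Choose a minimal reduction $J\subseteq I$; as $I$ is $\fm$-primary, $J$ is generated by a system of parameters, so $J$ is a parameter ideal and $e(J)=e(I)$. By Theorem~\ref{thm:parameterflat} with $n=1$, $e_s(J)=e(J)$ for every $s$; evaluating at $s$ large, where $e_s(J)=e_{\HK}(J)$, gives
\[e_{\HK}(J)=e(J)=e(I)=e_{\HK}(I).\]
Thus $I$ and its reduction $J$ have the same Hilbert--Samuel and the same Hilbert--Kunz multiplicity.

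Write $q=p^e$. Because $J^{[q]}\subseteq I^{[q]}$ and $e_{\HK}(J)=e_{\HK}(I)$,
\[\lambda\bigl(I^{[q]}/J^{[q]}\bigr)=\lambda\bigl(R/J^{[q]}\bigr)-\lambda\bigl(R/I^{[q]}\bigr)=o(q^d);\]
because $J^n\subseteq I^n$ and $e(J)=e(I)$, $\lambda(I^n/J^n)=\lambda(R/J^n)-\lambda(R/I^n)=o(n^d)$, so for each fixed $s$ also $\lambda\bigl(I^{\lceil sq\rceil}/J^{\lceil sq\rceil}\bigr)=o(q^d)$. Applying the elementary estimate $\lambda\bigl((A+C)/(B+C)\bigr)\le\lambda(A/B)$ for submodules $B\subseteq A$ twice along the chain $J^{\lceil sq\rceil}+J^{[q]}\subseteq J^{\lceil sq\rceil}+I^{[q]}\subseteq I^{\lceil sq\rceil}+I^{[q]}$, we get
\[\lambda\!\left(\frac{I^{\lceil sq\rceil}+I^{[q]}}{\,J^{\lceil sq\rceil}+J^{[q]}\,}\right)\le \lambda\bigl(I^{\lceil sq\rceil}/J^{\lceil sq\rceil}\bigr)+\lambda\bigl(I^{[q]}/J^{[q]}\bigr)=o(q^d).\]
Dividing by $q^d$ and letting $e\to\infty$ yields $h_s(I,I)=h_s(J,J)$, hence $e_s(I)=e_s(J)=e(J)=e(I)$ for all $s>0$, so $e_s(I)$ is constant.

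The one step that is not pure bookkeeping is the equality $e_{\HK}(I)=e_{\HK}(J)$ for a minimal reduction $J$: Hilbert--Kunz multiplicity is not reduction-invariant in general, and this is precisely where the hypothesis $e(I)=e_{\HK}(I)$ is used, via the parameter-ideal identity $e_s(J)=e(J)$ of Theorem~\ref{thm:parameterflat}. Everything after that reduces to the two easy inclusions above and counting colengths.
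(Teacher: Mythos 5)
Your proof is correct, and the reverse direction takes a genuinely different route from the paper's. The paper invokes the tight closure machinery behind the Watanabe--Yoshida minimal Hilbert--Kunz multiplicity theorem: from $e_{\HK}(I)=e(I)$ it extracts a parameter ideal $J$ with $J\subseteq I\subseteq J^*$, then squeezes via $e_s(J)\geq e_s(I)\geq e_s(J^*)=e_s(J)$, where the last equality is the $s$-analogue of the fact that tightly closing an ideal does not change its Hilbert--Kunz multiplicity. You avoid tight closure altogether. You use Theorem~\ref{thm:parameterflat} only to get $e_s(J)=e(J)$ for a minimal reduction $J$ (hence $e_{\HK}(J)=e_{\HK}(I)$), and then the hypothesis $e(I)=e_{\HK}(I)$ is cashed out as two asymptotic colength estimates --- $\lambda(I^{[q]}/J^{[q]})=o(q^d)$ and $\lambda(I^{\lceil sq\rceil}/J^{\lceil sq\rceil})=o(q^d)$ --- which you feed into the filtration $J^{\lceil sq\rceil}+J^{[q]}\subseteq J^{\lceil sq\rceil}+I^{[q]}\subseteq I^{\lceil sq\rceil}+I^{[q]}$ via the elementary surjection $A/B\onto (A+C)/(B+C)$ to conclude $h_s(I)=h_s(J)$ directly. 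What this buys you is a self-contained, purely length-theoretic argument that does not depend on the existence of test elements, the containment $I\subseteq J^*$, or the identity $e_s(J^*)=e_s(J)$; what the paper's version buys is brevity and a tighter conceptual link to the Watanabe--Yoshida picture that the rest of the section is developing. One small bookkeeping point worth noting: your residue-field enlargement $R\to R[x]_{\fm R[x]}$ is standard, but after it the ring is no longer complete, while Theorem~\ref{thm:parameterflat}'s proof builds the coefficient subring $k[[a_1,\ldots,a_d]]$; one should either complete again (the completion is still a domain here) or note that the paper's appeal to Theorem~\ref{thm:parameterflat} after producing a minimal reduction carries the same implicit reduction, so this is not a defect specific to your argument.
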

\begin{proof}
The following proof is inspired by \cite[Thm 1.8 (3)]{WY01}.
 Assuming $e_{\HK}(I) = e(I)$, we have a parameter ideal $J$ such that $J \subset I \subset J^*$, and therefore $e_s(J)\geq e_s(I)\geq e_s(J^*)=e_s(J)$ and we have equality throughout. Therefore, by Theorem~\ref{thm:parameterflat},
$e_s(I)=e_s(J)=e(J)=e(I)$ for all $s$.  The converse direction is obvious as $e(I)=e_1(I)$ and $e_{\HK}(I)=e_d(I)$.
\end{proof}

The following lemma is an $s$-analogue of a heavily utilized result for Hilbert-Kunz multiplicity, see  \cite[Cor. 2.2(2)]{HY02}, \cite[Lem. 4.2]{WY00}, and \cite[Lem. 2.2 (2)]{BE04}.

\begin{lem} \label{I-J inequality} If $(R,\fm)$ is a local ring of prime characteristic $p>0$ and dimension $d$ and $I$ and $J$ are $\fm$-primary ideals of $R$ such that $I\subseteq J\subseteq \overline{I}$, then
$e_s(I)\leq e_s(J)+\lambda(J/I)e_s(I,\fm)$.
\end{lem}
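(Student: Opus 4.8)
The plan is to reduce to the case $\lambda(J/I)=1$ and then run a length comparison one Frobenius power at a time.

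First I would pick a composition series $I=I_0\subsetneq I_1\subsetneq\cdots\subsetneq I_\ell=J$ with $\ell=\lambda(J/I)$ and $\lambda(I_j/I_{j-1})=1$. Then each $I_j/I_{j-1}\cong k$, so $I_j=I_{j-1}+Rx_j$ with $\fm x_j\subseteq I_{j-1}$, and $I_{j-1}\subseteq I_j\subseteq\overline I\subseteq\overline{I_{j-1}}$. Granting the one-step inequality $e_s(I_{j-1})\le e_s(I_j)+e_s(I_{j-1},\fm)$ for each $j$, and using that $I\subseteq I_{j-1}$ forces $I^{\lceil sp^e\rceil}+\fm^{[p^e]}\subseteq I_{j-1}^{\lceil sp^e\rceil}+\fm^{[p^e]}$ for all $e$ and hence (passing to the limits, which exist by \cite[Thm.~2.1]{Tay}) $e_s(I_{j-1},\fm)\le e_s(I,\fm)$, summing the $\ell$ inequalities telescopes to the statement.

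So assume $J=I+Rx$ with $\fm x\subseteq I$ and $I\subseteq J\subseteq\overline I$. Fix $q=p^e$, put $a=\lceil sq\rceil$, and write $A=I^a+I^{[q]}$, $B=J^a+J^{[q]}$, $C=I^a+\fm^{[q]}$, so that $e_s(I)$, $e_s(J)$, $e_s(I,\fm)$ are the $e\to\infty$ limits of $\lambda(R/A)$, $\lambda(R/B)$, $\lambda(R/C)$ divided by $\cH_s(d)q^d$. Since $I\subseteq J$ we have $A\subseteq J^a+I^{[q]}\subseteq B$, so
\[\lambda(R/A)=\lambda(R/B)+\lambda\big((J^a+I^{[q]})/A\big)+\lambda\big(B/(J^a+I^{[q]})\big).\]
I would bound the two discrepancy terms as follows. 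The surjection $J^a\to(J^a+I^{[q]})/A$ kills $I^a$, so that term has length at most $\lambda(J^a/I^a)$; since $I\subseteq J\subseteq\overline I$ means $I$ is a reduction of $J$, we get $J^a\subseteq I^{a-n_0}$ for $a\gg0$, whence $\lambda(J^a/I^a)=O(q^{d-1})$. For the other term, $J^{[q]}=I^{[q]}+Rx^q$, so $B/(J^a+I^{[q]})$ is cyclic on the class of $x^q$, i.e.\ $\cong R/\big((J^a+I^{[q]}):_Rx^q\big)$; now $\fm x\subseteq I$ gives $\fm^{[q]}x^q\subseteq I^{[q]}$, while $x\in J$ gives $I^ax^q\subseteq J^aJ^q\subseteq J^a$, so $(J^a+I^{[q]}):_Rx^q\supseteq I^a+\fm^{[q]}$ and this term has length $\le\lambda(R/C)$. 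Hence $\lambda(R/A)\le\lambda(R/B)+\lambda(R/C)+O(q^{d-1})$; dividing by $\cH_s(d)q^d$ and letting $e\to\infty$ yields $e_s(I)\le e_s(J)+e_s(I,\fm)$.

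The main obstacle is choosing the right intermediate ideal. Comparing $B$ to $A$ directly through $J^{[q]}/I^{[q]}$ is hopeless, since that has length of order $q^d$; the trick is to route through $J^a+I^{[q]}$, so that the ``large'' part of $x^q$ is absorbed into $I^a$ (via $x\in J$) and only the genuinely lower-order discrepancy $J^a/I^a$ and a cyclic quotient governed by $\fm^{[q]}$ (via $\fm x\subseteq I$) survive. The reduction to one-step extensions is precisely what makes the relation $\fm x\subseteq I$ available, and this is the only place the second ideal $\fm$ of $e_s(I,\fm)$ enters.
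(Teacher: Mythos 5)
Your proposal is correct and takes essentially the same route as the paper: reduce by a composition series (the paper does the equivalent by induction on $\lambda(J/I)$) to the case $\lambda(J/I)=1$, and then observe that the cyclic module carried by $x^q$ is annihilated by $I^{\lceil sq\rceil}+\fm^{[q]}$ since $\fm x\subseteq I$. The only difference is bookkeeping: the paper passes from $h_s(J)$ to $h_s(I,J)$ at once (using that $I$ is a reduction of $J$), while you route through $J^{\lceil sq\rceil}+I^{[q]}$ and bound the extra $\lambda\bigl(J^{\lceil sq\rceil}/I^{\lceil sq\rceil}\bigr)$ term as $O(q^{d-1})$ via the Hilbert polynomial; both devices discharge the same lower-order discrepancy.
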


\begin{proof} We proceed by induction on $\lambda(J/I)$.  Suppose that $\lambda(J/I)=1$, and let $x\in R$ generate $J/I$.  For any $q=p^e$, we have that
\begin{align*}
h_s(I)-h_s(J)  = h_s(I)-h_s(I,J) &= \lim_{q\to\infty}\frac{1}{q^d}\left(\lambda\left(R/I^{\lceil sq\rceil}+I^{[q]}\right)-\lambda\left(R/I^{\lceil sq\rceil}+J^{[q]}\right)\right)\\&= \lim_{q\to\infty}\frac{1}{q^d}\lambda\left(\frac{I^{\lceil sq\rceil}+J^{[q]}}{I^{\lceil sq\rceil}+I^{[q]}}\right)
= \lim_{q\to\infty}\frac{1}{q^d}\lambda\left(\frac{I^{\lceil sq\rceil}+I^{[q]}+(x^q)}{I^{\lceil sq\rceil}+I^{[q]}}\right).
\end{align*}
Since $\lambda(J/I)=1$, $\fm x\subseteq I$, and so $\fm^{[q]} x^q\subseteq I^{[q]}$.  Therefore $I^{\lceil sq\rceil}+\fm^{[q]}$ annihilates the principal module $\frac{I^{\lceil sq\rceil}+I^{[q]}+(x^q)}{I^{\lceil sq\rceil}+I^{[q]}}$, and hence $\lambda\left(\frac{I^{\lceil sq\rceil}+I^{[q]}+(x^q)}{I^{\lceil sq\rceil}+I^{[q]}}\right)\leq \lambda(R/I^{\lceil sq\rceil}+\fm^{[q]})$.  Therefore,
\[h_s(I)-h_s(J)= \lim_{q\to\infty}\frac{1}{q^d}\lambda\left(\frac{I^{\lceil sq\rceil}+I^{[q]}+(x^q)}{I^{\lceil sq\rceil}+I^{[q]}}\right)\leq \lim_{q\to\infty}\frac{1}{q^d}\lambda(R/I^{\lceil sq\rceil}+\fm^{[q]})=h_s(I,\fm).\]
Now suppose that $\lambda(J/I)\geq 2$.  Let $K$ be an ideal of $R$ such that $I\subsetneq K\subsetneq J$.  By induction, we have 
\[e_s(I)\leq e_s(K)+\lambda(K/I)e_s(I,\fm)\leq e_s(J)+\lambda(J/K)e_s(K,\fm)+\lambda(K/I)e_s(I,\fm)=e_s(J)+\lambda(J/I)e_s(I,\fm).\qedhere\]
\end{proof}

\begin{cor}\label{cor:AtLeast1} If $(R,\fm)$ is a Cohen-Macaulay local ring of prime characteristic $p>0$, then $e_s(R)\geq 1$.
\end{cor}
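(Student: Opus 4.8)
The plan is to apply Lemma~\ref{I-J inequality} to a minimal reduction of $\fm$ and to observe that the resulting error term is governed by the Cohen--Macaulay ``defect'' $e(\fm)-1$. First I would reduce to the case of infinite residue field: the flat local extension $R\to R[t]_{\fm R[t]}$ preserves dimension, Cohen--Macaulayness, and all colengths of $\fm$-primary ideals, hence preserves $e_s$, and makes the residue field infinite. So assume $k$ is infinite and pick a minimal reduction $J$ of $\fm$; then $J$ is a parameter ideal and $J\subseteq\fm\subseteq\overline{\fm}=\overline{J}$. Applying Lemma~\ref{I-J inequality} with the pair $J\subseteq\fm\subseteq\overline{J}$ gives
\[e_s(J)\ \le\ e_s(\fm)+\lambda(\fm/J)\,e_s(J,\fm).\]

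Next I would identify the three quantities on the right. By the $n=1$ instance of Theorem~\ref{thm:parameterflat}, $e_s(J)=e(J)=e(\fm)$, the last equality because a reduction has the same Hilbert--Samuel multiplicity. Since $R$ is Cohen--Macaulay and $J$ is a parameter ideal, $\lambda(R/J)=e(J)=e(\fm)$, so $\lambda(\fm/J)=\lambda(R/J)-\lambda(R/\fm)=e(\fm)-1$. The crucial---and only delicate---point is that
\[e_s(J,\fm)\ \le\ e_s(\fm,\fm)=e_s(R).\]
Here one uses that $J$ is a reduction of $\fm$: if $r$ denotes its reduction number, then $\fm^{\ell+r}\subseteq J^{\ell}$ for all $\ell\ge 0$, so $J^{\lceil sq\rceil}+\fm^{[q]}\supseteq\fm^{\lceil sq\rceil+r}+\fm^{[q]}$ for every $q=p^{e}$, whence $\lambda\big(R/(J^{\lceil sq\rceil}+\fm^{[q]})\big)\le\lambda\big(R/(\fm^{\lceil sq\rceil+r}+\fm^{[q]})\big)$. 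Shifting the first exponent by the fixed amount $r$ changes these colengths only by $O(q^{d-1})$ (a bounded sum of terms $\lambda(\fm^{i}/\fm^{i+1})$ with $i\asymp q$), so dividing by $q^{d}$ and letting $q\to\infty$ yields $h_s(J,\fm)\le h_s(\fm,\fm)$, i.e. $e_s(J,\fm)\le e_s(R)$. Equivalently, one may cite from \cite{Tay} that $e_s(\,\cdot\,,\fm)$ is unchanged upon replacing its first argument by a reduction; only the inequality ``$\le$'' is needed here.

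Putting these together,
\[e(\fm)=e_s(J)\ \le\ e_s(R)+(e(\fm)-1)\,e_s(R)=e(\fm)\,e_s(R),\]
and dividing by $e(\fm)\ge 1$ gives $e_s(R)\ge 1$. The argument works uniformly in $s>0$, with no case division. The main obstacle is the inequality $e_s(J,\fm)\le e_s(R)$: it is exactly the assertion that passing to a reduction in the first slot of $e_s$ does not increase it, and while for the Hilbert--Samuel ($s$ small) and Hilbert--Kunz ($s$ large) endpoints this is the classical invariance under integral closure, for intermediate $s$ it has to be extracted directly from the asymptotics of $h_s$ as above; everything else---the minimal reduction, $e_s(J)=e(J)$, and $\lambda(\fm/J)=e(\fm)-1$---is formal once Theorem~\ref{thm:parameterflat} and the Cohen--Macaulay hypothesis are available.
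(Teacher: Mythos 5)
Your proof is correct and essentially matches the paper's argument: both apply Lemma~\ref{I-J inequality} to a minimal reduction $J$ of $\fm$, use $e_s(J)=e(J)=\lambda(R/J)$ (Cohen--Macaulay parameter case) together with $\lambda(\fm/J)=\lambda(R/J)-1$, and exploit that $e_s(J,\fm)$ coincides with (or is at most) $e_s(\fm)$. The only cosmetic difference is that the paper cites \cite[Prop.~2.6(v)]{Tay} for that last point, whereas you rederive the inequality directly from the reduction number via an $O(q^{d-1})$ estimate --- an alternative you yourself flag.
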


\begin{proof}
We may assume that $R/\fm$ is infinite.  Let $I$ be a minimal reduction of $\fm$.  We have, by Lemma \ref{I-J inequality}, that $e_s(I)\leq e_s(\fm)+\lambda (\fm/I)e_s(I,\fm)=e_s(\fm)\lambda(R/I)$ where the last equalty holds by \cite[Prop. 2.6(v)]{Tay} and additivity of length on short exact sequences. Since $I$ is a parameter ideal and $R$ is Cohen-Macaulay, we have $e_s(I)=e(I)=\lambda(R/I)$, which finishes the proof.
\end{proof}

\subsection{General Lower Bounds}

We conclude this section with an important lower bound for $e_s(I)$ for general $s$ and $\fm$-primary $I$ in Cohen-Macaulay rings which matches in spirit the lower bounds explored by \cite[Thm. 2.2]{WY05}. In some ways, working with $s$-multiplicity simplifies their arguments, which were suggestive all along of this type of multiplicity. 

\begin{thm}\label{thm:MainLowerBound}
Let $(R,\fm)$ be a $d$-dimensional Cohen-Macaulay local ring of characteristic $p>0$, let $I\subset R$ be an $\fm$-primary ideal, and let $J$ be a reduction of $I$ that is a parameter ideal.  For any $r\geq \mu(I/J^\ast)$ and $1\leq t \leq s$, 
\[e_s(I)\geq \left(\frac{\cH_t(d)-r\cH_{t-1}(d)}{\cH_s(d)}\right)e(I).\]
\end{thm}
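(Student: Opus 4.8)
The plan is to disentangle the two appearances of $s$. Writing $e_s(I)=h_s(I)/\cH_s(d)$ with $\cH_s(d)>0$, the asserted inequality is equivalent to $h_s(I)\geq(\cH_t(d)-r\cH_{t-1}(d))\,e(I)$; the right-hand side does not involve $s$, while $s\mapsto h_s(I)$ is non-decreasing (enlarging $s$ only shrinks $I^{\lceil sq\rceil}+I^{[q]}$, hence enlarges its colength). So it suffices to prove $h_t(I)\geq(\cH_t(d)-r\cH_{t-1}(d))\,e(I)$ and then invoke monotonicity for $s\geq t$. Below $q$ always denotes $p^{e}$ with $e\to\infty$, and we may discard finitely many small $q$.

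Next I would manufacture a good multiplier. List generators $z_1,\dots,z_\ell$ of $J^{\ast}$; by definition of tight closure each $z_j$ admits $c_j\in R^{\circ}$ with $c_jz_j^{q}\in J^{[q]}$ for $q\gg 0$, and since $R$ is Cohen--Macaulay these $c_j$, hence their product $c:=c_1\cdots c_\ell$, are nonzerodivisors, with $c\,(J^{\ast})^{[q]}\subseteq J^{[q]}$ for $q\gg 0$. Pick $x_1,\dots,x_r\in I$ with $I=(x_1,\dots,x_r)+J^{\ast}$ (possible since $r\geq\mu(I/J^{\ast})$), and let $\rho$ be a reduction number for $J\subseteq I$, so $I^{n}=J^{\,n-\rho}I^{\rho}$ for $n\geq\rho$; put $K_q:=J^{\lceil tq\rceil-\rho}+J^{[q]}$. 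Then $I^{[q]}=(J^{\ast})^{[q]}+(x_1^{q},\dots,x_r^{q})$ and $I^{\lceil tq\rceil}\subseteq J^{\lceil tq\rceil-\rho}$, so for $q\gg 0$
\[ I^{\lceil tq\rceil}+I^{[q]}\ \subseteq\ K_q+(J^{[q]}:c)+(x_1^{q},\dots,x_r^{q}).\]
Replacing $(J^{[q]}:c)$ by $J^{[q]}$ alters colength by at most $\lambda\!\left((J^{[q]}:c)/J^{[q]}\right)=\lambda\!\left(R/(J^{[q]}+(c))\right)$, which is $O(q^{d-1})$ because $R/(c)$ is $(d-1)$-dimensional and Frobenius powers of an $\fm$-primary ideal in a $(d-1)$-dimensional local ring have colength of that order. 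Hence
\[ \lambda\!\left(R/(I^{\lceil tq\rceil}+I^{[q]})\right)\ \geq\ \lambda(R/K_q)\ -\ \lambda\!\left(\frac{K_q+(x_1^{q},\dots,x_r^{q})}{K_q}\right)\ -\ O(q^{d-1}),\]
and since $\lambda\!\left(J^{\lceil tq\rceil-\rho}/J^{\lceil tq\rceil}\right)=O(q^{d-1})$ by the Hilbert--Samuel function of $J$, Theorem~\ref{thm:parameterflat} (which gives $e_t(J)=e(J)$) together with $e(J)=e(I)$ yields $\lambda(R/K_q)/q^{d}\to h_t(J)=\cH_t(d)\,e(I)$.

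The crux is the correction term. Filtering $(K_q+(x_1^{q},\dots,x_r^{q}))/K_q$ by the $x_i^{q}$ one at a time gives $\lambda\!\left((K_q+(x_1^{q},\dots,x_r^{q}))/K_q\right)\leq\sum_{i=1}^{r}\lambda\!\left(R/(K_q:x_i^{q})\right)$, so I need each $K_q:x_i^{q}$ to be large. Because $q$ is an integer, $\lceil(t-1)q\rceil=\lceil tq\rceil-q$; and $x_i^{q}\in I^{q}\subseteq J^{\,q-\rho}$, so
\[ J^{\lceil(t-1)q\rceil}x_i^{q}\ \subseteq\ J^{\lceil(t-1)q\rceil+q-\rho}\ =\ J^{\lceil tq\rceil-\rho}\ \subseteq\ K_q,\qquad J^{[q]}x_i^{q}\subseteq J^{[q]}\subseteq K_q,\]
whence $K_q:x_i^{q}\supseteq J^{\lceil(t-1)q\rceil}+J^{[q]}$ and $\lambda\!\left(R/(K_q:x_i^{q})\right)\leq\lambda\!\left(R/(J^{\lceil(t-1)q\rceil}+J^{[q]})\right)$. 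Dividing by $q^{d}$ and letting $q\to\infty$, the correction term is at most $r\,h_{t-1}(J)=r\,\cH_{t-1}(d)\,e(I)$, again by Theorem~\ref{thm:parameterflat} (when $t=1$ it is $0$, matching $\cH_0(d)=0$). Assembling the three estimates gives $h_t(I)\geq(\cH_t(d)-r\cH_{t-1}(d))\,e(I)$, and the theorem follows.

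I expect the real work to lie in the bookkeeping of error terms rather than in any single hard step: one must confirm that the two corrections — from replacing $(J^{[q]}:c)$ by $J^{[q]}$ and from the reduction-number shift in $K_q$ — are genuinely $o(q^{d})$, and that collecting the finitely many tight-closure multipliers into a single nonzerodivisor $c$ with $c(J^{\ast})^{[q]}\subseteq J^{[q]}$ for $q\gg0$ is legitimate; this last point is what permits the hypothesis $r\geq\mu(I/J^{\ast})$ in place of $r\geq\mu(I/J)$, and it is where the argument most directly touches tight-closure theory. By contrast, the colon inclusion $K_q:x_i^{q}\supseteq J^{\lceil(t-1)q\rceil}+J^{[q]}$, which turns the $r$ extra generators into $r$ copies of $h_{t-1}(J)$ and thereby produces the $\cH_{t-1}(d)$ term essentially for free, is the conceptual heart and hinges only on $t\geq1$ and on $q$ being an integer.
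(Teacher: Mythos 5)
Your argument is correct and ultimately rests on the same core idea as the paper: the colon-ideal annihilator observation that, because $q$ is an integer, $\lceil(t-1)q\rceil + q = \lceil tq\rceil$, so that for $x$ a generator of $I$ modulo $J^\ast$ the element $x^q$ is killed into the appropriate ideal by a $(t-1)$-power, producing the $r\cH_{t-1}(d)\,e(I)$ term; and both proofs absorb two error contributions of order $o(q^d)$, one coming from replacing $I$-powers by $J$-powers (reduction number) and one coming from replacing $(J^\ast)^{[q]}$ by $J^{[q]}$ (tight closure). The bookkeeping, however, is organized differently and your organization is arguably cleaner in two respects. First, you dispose of the two parameters $t \le s$ up front by observing $h_s(I)$ is nondecreasing in $s$, reducing at once to the case $s = t$; the paper instead carries both $t$ and $s$ through a four-term chain $J^{\lceil sq\rceil}+J^{[q]} \subseteq J^{\lceil tq\rceil}+J^{[q]} \subseteq J^{\lceil tq\rceil}+(J^\ast)^{[q]} \subseteq I^{\lceil tq\rceil}+(J^\ast)^{[q]} \subseteq I^{\lceil tq\rceil}+I^{[q]}$, the fourth piece supplying the $(\cH_s-\cH_t)e(I)$ that your monotonicity reduction renders unnecessary. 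Second, the paper asserts the two $o(q^d)$ estimates for the middle pieces as known facts, whereas you make them explicit via a single nonzerodivisor multiplier $c$ with $c\,(J^\ast)^{[q]}\subseteq J^{[q]}$ and a reduction number $\rho$, tracking the errors as $O(q^{d-1})$ through $\lambda(R/(J^{[q]}+(c)))$ and $\lambda(J^{\lceil tq\rceil-\rho}/J^{\lceil tq\rceil})$. Both are valid; yours is more self-contained, the paper's more compact. The one place worth a second glance, which you correctly handle, is the boundary case $t=1$: the annihilator becomes all of $R$, the colength vanishes, and $\cH_0(d)=0$, so the bound degenerates gracefully.
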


\begin{proof} Throughout $q$ denotes a power of $p$ and references to limits with $q \to \infty$ assume $q$ moves through prime powers. For any fixed $q$, we have that
\begin{align*}
\lambda\left(\frac{I^{\lceil sq\rceil} +I^{[q]}}{J^{\lceil sq\rceil}+J^{[q]}}\right)
&\leq \lambda\left(\frac{I^{\lceil tq\rceil} +I^{[q]}}{J^{\lceil sq\rceil}+J^{[q]}}\right)\\
&=\lambda\left(\frac{I^{\lceil tq\rceil} +I^{[q]}}{I^{\lceil tq\rceil}+(J^\ast)^{[q]}}\right) 
+ \lambda\left(\frac{I^{\lceil tq\rceil} +(J^\ast)^{[q]}}{J^{\lceil tq\rceil}+(J^\ast)^{[q]}}\right)
+ \lambda\left(\frac{J^{\lceil tq\rceil}+(J^\ast)^{[q]}}{J^{\lceil tq\rceil} +J^{[q]}}\right)
+\lambda\left(\frac{J^{\lceil tq\rceil} +J^{[q]}}{J^{\lceil sq\rceil}+J^{[q]}}\right).
\end{align*}
If $x$ is a generator of $I/J^\ast$, then $\frac{I^{\lceil tq\rceil} +(J^\ast)^{[q]}+(x^{q})}{I^{\lceil tq\rceil}+(J^\ast)^{[q]}}$ is annihilated by $I^{\lceil (t-1)q\rceil}+(J^\ast)^{[q]}$, and so,
\begin{align*}
\lim_{q\to\infty}\frac{1}{q^d}\lambda\left(\frac{I^{\lceil tq\rceil} +I^{[q]}}{I^{\lceil tq\rceil}+(J^\ast)^{[q]}}\right)
&\leq \lim_{q\to\infty}\frac{1}{q^d}r\cdot \lambda\left(R/(I^{\lceil (t-1)q\rceil}+(J^\ast)^{[q]})\right)\\&=r\cdot h_{t-1}(I,J^\ast)=rh_{t-1}(J)=r\cH_{t-1}(d)\lambda(R/J)=r\cH_{t-1}(d)e(I).\end{align*}
Since $J$ is a minimal reduction of $I$, $\lambda\left(\frac{I^{\lceil tq\rceil} +(J^\ast)^{[q]}}{J^{\lceil tq\rceil}+(J^\ast)^{[q]}}\right)=o(q^d)$.
Furthermore, we have that $\lambda\left(\frac{J^{\lceil tq\rceil}+(J^\ast)^{[q]}}{J^{\lceil tq\rceil} +J^{[q]}}\right)= o(q^d)$.
Finally, since $J$ is a parameter ideal and $R$ is Cohen-Macaulay,
\[\lim_{q\to\infty}\frac{1}{q^d}\lambda\left(\frac{J^{\lceil tq\rceil} +J^{[q]}}{J^{\lceil sq\rceil}+J^{[q]}}\right) = h_s(J)-h_t(J)=(\cH_s(d)-\cH_t(d))\lambda(R/J)=(\cH_s(d)-\cH_t(d))e(I).\]
All the above together implies
\begin{align*}
h_s(J)-h_s(I)
=\lim_{q\to \infty}\frac{1}{q^d}\lambda\left(\frac{I^{\lceil sq\rceil}+I^{[q]}}{J^{\lceil sq\rceil}+J^{[q]}}\right)
\leq r\cH_{t-1}(d)e(I)+(\cH_s(d)-\cH_t(d))e(I).
\end{align*}
Since $e_s(J)=\lambda (R/J)=e(I)$, 
\begin{align*}e_s(I)=\frac{h_s(I)}{\cH_s(d)}
&\geq \frac{1}{\cH_s(d)}\left(h_s(J)- \left(r\cH_{t-1}(d)+\cH_s(d)-\cH_t(d)\right)e(I)\right)\\
&=e_s(J)-\left(r\frac{\cH_{t-1}(d)}{\cH_s(d)}+1-\frac{\cH_t(d)}{\cH_s(d)}\right)e(I)
= \left(\frac{\cH_t(d)-r\cH_{t-1}(d)}{\cH_s(d)}\right)e(I).\qedhere
\end{align*}
\end{proof}

\begin{rmk} The following observation is a key part of the arguments in \cite{WY05}. If $R$ is Cohen-Macaulay and $J$ is a reduction of $I$ that is a parameter ideal, then $\mu(I/J^*)\leq e(I)-1$, which can be easily seen:
$$\mu(I/J^*) \leq \lambda(I/J^*) = \lambda(R/J^*) - \lambda(R/I) \leq  \lambda(R/J) - 1 = e(J)-1=e(I) -1.$$  If $R$ is not $F$-rational, then we can do even better:
$$\mu(I/J^*) \leq \lambda(I/J^*) = \lambda(R/J) -\lambda(J^*/J)- \lambda(R/I) \leq  \lambda(R/J) - 2 = e(J)-2=e(I) -2.$$
This means one maximizes the lower bound in the case with $r = e(I)-1$ in the $F$-rational case and $r=e(I)-2$ in the non-$F$-rational case. In the next section, we apply this with $I = \fm$. Note in particular, this result gives most information for rings of multiplicity $e \geq 2$ as when $e = 1$ we recover only a lower bound of $1$. 

\end{rmk}

\section{An $s$-analogue of a conjecture by Watanabe and Yoshida}

Our primary application of the results of the last section, in particular Theorem~\ref{thm:MainLowerBound}, is towards minimal values for singular rings. Throughout this section, for $k$ a field, set $R_d = k[[x_0,\ldots,x_d]]/\sum x_i^2$ for $d \geq 1$. The Watanabe-Yoshida conjecture predicts that these rings enjoy the absolute minimal Hilbert-Kunz multiplicity among unmixed singular rings of that fixed dimension. This is known  for all rings of dimension at most $4$ and for $p > 2$ for rings of dimension at most $6$ by \cite{WY05, AE13}, for complete intersections by \cite{ES05}, and for rings in any dimension of multiplicity $5$ by \cite{AE13}. Moreover, rings achieving the minimal value are conjectured to be analytically isomorphic to $R_d$. Inspired, we ask the following question. 

\begin{que}\label{que:BigQuestion} Fix $R$ an unmixed local ring of dimension $d \geq 1$.  
\begin{enumerate}
\item Is $e_s(R) \geq e_s(R_d)$ for all $s$? 
\item Does equality for any fixed $s$ force $R$ to be analytically isomorphic to $R_d$?
\end{enumerate}
\end{que}

We exploit the lower bound in Theorem~\ref{thm:MainLowerBound}, analogous to the work of Watanabe-Yoshida and Aberbach-Enescu, to give evidence of a positive answer to this question in the Cohen-Macaulay setting and dimension at most $3$ as well as complete intersections in all dimensions. Before doing so, we recall by \cite[Example 5.6]{Tay} the $s$-multiplicity of the $n$-th Veronese of $k[[x,y]]$. For $n=2$ we have 
\[e_s(R_2)=\begin{cases} 2-\frac{2\cH_{s-1}(2)}{\cH_s(2)} & \text{if } s\leq \frac{3}{2} \\ \frac{3}{2\cH_s(2)} & \text{if } s\geq \frac{3}{2}.\end{cases}\] This follows from a direct volume computation. Likewise, we have need an explicit computation of $e_s(R_3)$. To streamline the exposition, we have included a more detailed version of this computation in the Appendix. 

\begin{xmp}\label{xmp:R3} The $s$-multiplicity of $R_3$ is given by 
\[e_s(R_3)=\begin{cases} 2-\frac{2\cH_{s-1}(3)}{\cH_s(3)} & \text{if } s\leq 2 \\ \frac{4}{3\cH_s(3)} & \text{if } s\geq 2.\end{cases}\] This computation lies in recognizing $R_3 \cong k[[x,y,z,w]]/(xy-zw)$. This allows one to express $R_3$ as an affine semigroup ring, defined by a cone $\sigma$. The resulting length computation then follows by computing the volume of the set \[U=\left\{v\in \sigma^\vee \;|\;v\notin s\cdot \Hull \fm \cup (\Exp \fm+\sigma^\vee)\right\}\] where we recall that $\Exp \fm$ is the set of semigroup elements arising in $\fm$ and $\Hull \fm$ is the convex hull of $\Exp \fm$. Routine toric computations, explained in the appendix, expresses this volume for $s \leq 2$ as an integral 
\begin{align*}
h_s(R_3)=\vol(U)&=2\int_0^1\cH_{s-z}(2)-z^2\cH_{1-(2-s)/z}(2)dz=2\cH_s(3)-2\int_{2-s}^1\frac{1}{2}z^2\left(1-\frac{2-s}{z}\right)^2dz\\
&=2\cH_s(3)-\int_{2-s}^1(z-(2-s))^2dz=2\cH_s(3)-\frac{(s-1)^3}{3}=2\cH_s(3)-2\cH_{s-1}(3). 
\end{align*} Furthermore, the $F$-threshold of $\fm$ with respect to $\fm$ is at most $2$. Hence, by \cite[Lem. 3.2]{Tay}, $h_s(R_3)= e_{\HK}(R_3)$ in that range. 
\end{xmp}

Armed with these computations, we provide our main supporting evidence towards Question~\ref{que:BigQuestion} (1). 

\begin{lem} \label{small s inequality} If $R$ is a singular Cohen-Macaulay ring of dimension $d$ and multiplicity $e$ and $s\leq \min\left\{2,\frac{\sqrt[d]{d+e+1}}{\sqrt[d]{d+e+1}-1}\right\}$, then $$e_s(R)\geq 2-\frac{2\cH_{s-1}(d)}{\cH_{s}(d)}.$$
\end{lem}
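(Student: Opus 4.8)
The plan is to apply Theorem~\ref{thm:MainLowerBound} with $I=\fm$, together with the bound on $\mu(\fm/J^\ast)$ recorded in the remark following it, and to make the simplest choice of the free parameter $t$.

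First I would pick a minimal reduction $J$ of $\fm$ that is a parameter ideal; this requires $R/\fm$ infinite, but the inequality to be proved is insensitive to the faithfully flat extension $R\to R[X]_{\fm R[X]}$ (both $e_s(R)$ and $\cH_\bullet(d)$ are preserved), so we may assume this. Since $R$ is Cohen-Macaulay, $e(\fm)=e$ and the remark gives $\mu(\fm/J^\ast)\le e(\fm)-1 = e-1$. So Theorem~\ref{thm:MainLowerBound}, applied with $r=e-1$ and any $1\le t\le s$, yields
\[
e_s(R)\ \ge\ \left(\frac{\cH_t(d)-(e-1)\cH_{t-1}(d)}{\cH_s(d)}\right)e .
\]
The obvious move is to take $t=s$ (allowed since $s\ge 1$ — and if $s\le 1$ we are in a regime where $e_s(R)$ equals the Hilbert–Samuel bound $e\ge 2$, which already beats $2-2\cH_{s-1}(d)/\cH_s(d)\le 2$; I would dispose of that case separately, noting $\cH_{s-1}(d)\ge 0$). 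With $t=s$ we get
\[
e_s(R)\ \ge\ \frac{e\,\cH_s(d) - e(e-1)\,\cH_{s-1}(d)}{\cH_s(d)}\ =\ e - \frac{e(e-1)\,\cH_{s-1}(d)}{\cH_s(d)} .
\]

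The remaining task is purely the inequality
\[
e - \frac{e(e-1)\,\cH_{s-1}(d)}{\cH_s(d)}\ \ge\ 2 - \frac{2\,\cH_{s-1}(d)}{\cH_s(d)},
\]
i.e. $(e-2)\,\cH_s(d)\ \ge\ \bigl(e(e-1)-2\bigr)\,\cH_{s-1}(d) = (e+2)(e-1)\,\cH_{s-1}(d)$. If $e=2$ this is automatic since $\cH_{s-1}(d)\ge 0$; if $e\ge 3$ (singularity of a Cohen–Macaulay ring of dimension $d$ forces $e\ge 2$, and we are left with $e\ge 3$) it rearranges to the requirement $\cH_s(d)/\cH_{s-1}(d)\ge (e+2)(e-1)/(e-2)$. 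Here I would use the hypothesis on $s$: the bound $s\le \sqrt[d]{d+e+1}/(\sqrt[d]{d+e+1}-1)$ is engineered so that $\cH_{s-1}(d)$ is small relative to $\cH_s(d)$. Concretely, for $s\le 2$ one has $\cH_{s-1}(d)=(s-1)^d/d!$ (only the $i=0$ term survives since $s-1<1$) and $\cH_s(d)\ge \cH_s(d)$ can be bounded below by a clean expression — e.g. $\cH_s(d)\ge \bigl(s^d-d(s-1)^d\bigr)/d!$ keeping the $i=0,1$ terms and discarding the rest by an alternating-sum / inclusion–exclusion positivity estimate. The numerical constraint $(s-1)/s \le 1/(\sqrt[d]{d+e+1}-1)$ is exactly what is needed to push $s^d/(s-1)^d - d$ above $(e+2)(e-1)/(e-2)$; I would verify this by substituting $s/(s-1)\ge \sqrt[d]{d+e+1}$, so $s^d/(s-1)^d - d \ge e+1$, and then check $e+1 \ge (e+2)(e-1)/(e-2)$ — wait, that last step fails for large $e$, so the genuine content is that one must keep more than two terms of $\cH_s(d)$, or the constant in the hypothesis is tuned to the actual minimum we need rather than this crude bound.

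The main obstacle, then, is precisely this last elementary-but-delicate estimate: bounding the ratio $\cH_s(d)/\cH_{s-1}(d)$ from below in terms of the quantity $d+e+1$ using the hypothesis on $s$, and confirming the resulting inequality $(e-2)\cH_s(d)\ge (e+2)(e-1)\cH_{s-1}(d)$ holds. Everything else — reducing to infinite residue field, invoking Theorem~\ref{thm:MainLowerBound}, choosing $r=e-1$ and $t=s$, and handling $s\le 1$ separately — is routine. I expect the cleanest route through the obstacle is to write $\cH_s(d) = \sum_{i\ge 0}(-1)^i\binom{d}{i}(s-i)^d/d!$ and for $s\le 2$ bound it below by grouping terms in consecutive pairs to show $\cH_s(d)\ge \bigl(s^d - d(s-1)^d\bigr)/d! + (\text{nonnegative})$, reducing the whole thing to the single scalar inequality $s^d/(s-1)^d \ge d + (e+2)(e-1)/(e-2)$, which the hypothesis on $s$ is exactly calibrated to give (one checks $d+e+1 \ge d+(e+2)(e-1)/(e-2)$ for $e\ge 3$, i.e. $e+1\ge (e+2)(e-1)/(e-2)$, equivalently $(e+1)(e-2)\ge (e+2)(e-1)$, i.e. $e^2-e-2\ge e^2+e-2$ — false, so in fact the hypothesis must be read as giving the sharper $\cH_s(d)\ge \cH_s(d)$ directly and one keeps \emph{all} surviving terms; I would therefore present the estimate keeping $\cH_s(d)=\bigl(s^d-d(s-1)^d\bigr)/d!$ exactly when $1\le s\le 2$, since then indeed only $i=0,1$ contribute, giving the exact identity $\cH_s(d)/\cH_{s-1}(d) = s^d/(s-1)^d - d \ge e+1$, and then the needed inequality is $(e-2)(e+1)\ge (e+2)(e-1)$ — which is the genuine sticking point and shows the constant in the statement should be re-derived from $(e-2)\cdot\frac{s^d}{(s-1)^d} \ge (e+2)(e-1)+ d(e-2)$, i.e. $s/(s-1) \ge \bigl(d + \tfrac{(e+2)(e-1)}{e-2}\bigr)^{1/d}$). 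I would reconcile this with the stated hypothesis by checking that $d+e+1 \ge d + \frac{(e+2)(e-1)}{e-2}$ holds — and if it does not for all $e$, restrict attention to the range where Lemma~\ref{small s inequality} is actually invoked (namely small $e$, where it does hold), which is consistent with the remark that the lower bound is most useful for small multiplicity.
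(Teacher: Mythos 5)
Your strategy is exactly the paper's: apply Theorem~\ref{thm:MainLowerBound} with $I=\fm$, $r=e-1$, $t=s$, and then reduce to a scalar inequality between $\cH_s(d)$ and $\cH_{s-1}(d)$ enforced by the hypothesis on $s$. The opening reductions (infinite residue field, $s\le 1$ handled separately, $e=2$ trivial) are all fine. The argument goes off the rails because of a single algebra error: you factor $e(e-1)-2$ as $(e+2)(e-1)$, but
\[
e(e-1)-2=e^2-e-2=(e-2)(e+1),
\]
not $(e+2)(e-1)=e^2+e-2$. With the correct factorization, the target inequality $(e-2)\cH_s(d)\ge\bigl(e(e-1)-2\bigr)\cH_{s-1}(d)$ reads
\[
(e-2)\,\cH_s(d)\ \ge\ (e-2)(e+1)\,\cH_{s-1}(d),
\]
which is trivial for $e=2$ and, for $e\ge 3$, reduces after dividing by $e-2$ to $\cH_s(d)\ge (e+1)\cH_{s-1}(d)$. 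That last inequality is precisely what the hypothesis $s\le\sqrt[d]{d+e+1}/(\sqrt[d]{d+e+1}-1)$ gives: for $1\le s\le 2$ one has $\cH_s(d)=\frac{s^d-d(s-1)^d}{d!}$ and $\cH_{s-1}(d)=\frac{(s-1)^d}{d!}$ exactly, so $\cH_s(d)-(e+1)\cH_{s-1}(d)=\frac{s^d-(d+e+1)(s-1)^d}{d!}\ge 0$, and the hypothesis is calibrated to make that nonnegative. (This is also why your ``keep $i=0,1$ terms'' observation was the right move; you then computed $\cH_s(d)/\cH_{s-1}(d)-d\ge e+1$ correctly and compared against the wrong target.) There is no need to re-derive the constant, and no contradiction with the paper. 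Once the factoring is fixed, your proof is complete and coincides with the paper's, apart from your (correct and slightly more careful) separate treatment of $s\le 1$, which the paper glosses over since $\cH_{s-1}(d)=0$ there anyway.
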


\begin{proof} In this case, we have that $0\leq \frac{s^d}{d!}-\frac{(d+e+1)(s-1)^d}{d!}=\cH_s(d)-(e+1)\cH_{s-1}(d)$,
and consequently it is straightforward to check 
$$e_s(R)\geq \frac{\left(\cH_{s}(d)-(e-1)\cH_{s-1}(d)\right)e}{\cH_s(d)}\geq \frac{2\cH_s(d)-2\cH_{s-1}(d)}{\cH_s(d)}=2-\frac{2\cH_{s-1}(d)}{\cH_{s}(d)}.\qedhere$$
\end{proof}

\begin{thm} \label{sWY dim<4} Let $(R,\fm)$ be an unmixed singular Cohen-Macaulay local ring of prime characteristic $p>0$ and dimension $d$ at most 3. For any $s>0$, $e_s(R)\geq e_s(R_d)$.
\end{thm}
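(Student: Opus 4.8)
The plan is to dispose of the three cases $d=1,2,3$ separately, in each case playing off the explicit piecewise formula for $e_s(R_d)$ against the lower bounds already assembled in this section. The case $d=1$ is essentially trivial: a one-dimensional singular Cohen--Macaulay ring has multiplicity $e\geq 2$, and $R_1 = k[[x,y]]/(x^2+y^2)$ (or $k[[x,y]]/(xy)$ in characteristic $\neq 2$) has $e_s(R_1)$ equal to the normalized volume of a one-dimensional region; since $\cH_s(1)=\min\{s,1\}$ one checks directly that $e_s(R_1)\leq 2\leq e(R)/1! \leq e_s(R)\cdot d!$ wherever the relevant estimate of Lemma~\ref{lem:lowerestimate} or Theorem~\ref{thm:parameterflat} applies, and for small $s$ both sides equal $e(R)$ resp.\ $e(R_1)=2$. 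So the heart of the matter is $d=2$ and $d=3$.

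For $d=2,3$ I would split the $s$-axis at the breakpoint $s_0$ appearing in the formula for $e_s(R_d)$ (namely $s_0=\tfrac32$ for $d=2$ and $s_0=2$ for $d=3$). For $s\geq s_0$ we have $e_s(R_d) = c_d/\cH_s(d)$ with $c_2=\tfrac32$, $c_3=\tfrac43$, i.e.\ $e_s(R_d) = e_{\HK}(R_d)/\cH_s(d)$; since $e_{\HK}(R_d)<2$, it suffices to show $e_s(R)\geq 2/\cH_s(d)$, and this follows by applying Theorem~\ref{thm:MainLowerBound} with $I=\fm$, $t=\min\{s,d\}$ (so $\cH_t(d)=\cH_s(d)$ when $s\geq d$, and otherwise $t=s$) and $r=\mu(\fm/J^\ast)$, together with the Remark's bound $r\leq e-1$ (or $e-2$ in the non-$F$-rational case) and $e\geq 2$ — one then checks the resulting rational expression in $\cH_t(d),\cH_{t-1}(d)$ is $\geq 2/\cH_s(d)$. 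For $s<s_0$ the target value is $e_s(R_d) = 2 - 2\cH_{s-1}(d)/\cH_s(d)$, which is exactly the bound delivered by Lemma~\ref{small s inequality} provided $s$ also lies below $\sqrt[d]{d+e+1}/(\sqrt[d]{d+e+1}-1)$; so the only remaining sub-case is $s<s_0$ but $s$ larger than that auxiliary threshold, which forces $e$ large, and there one falls back on Theorem~\ref{thm:MainLowerBound} directly with $t=1$ (giving $e_s(R)\geq (1-r\cH_0(d))e/\cH_s(d)$, but $\cH_0(d)=0$, so $e_s(R)\geq e/\cH_s(d)\geq 2/\cH_s(d)$) — one then checks $2/\cH_s(d)\geq 2-2\cH_{s-1}(d)/\cH_s(d)$, i.e.\ $\cH_s(d)+\cH_{s-1}(d)\geq \cH_s(d)^2$, which holds since $\cH_s(d)\leq 1$ and $\cH_{s-1}(d)\geq 0$. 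Throughout one reduces to infinite residue field by the standard faithfully flat base change, so that minimal reductions $J$ of $\fm$ which are parameter ideals exist.

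The routine-but-fiddly part is the explicit inequality juggling with the functions $\cH_s(d)=\sum_{i=0}^{\lfloor s\rfloor}\frac{(-1)^i}{d!}\binom{d}{i}(s-i)^d$ for $d=2,3$: one must verify monotonicity/convexity-type estimates like $\cH_{t-1}(d)/\cH_t(d)\leq$ (something) on the relevant $s$-intervals and patch the pieces at the breakpoints. The genuine obstacle, though, is making sure the three regimes — small $s$ (Lemma~\ref{small s inequality}), intermediate $s$ (Theorem~\ref{thm:MainLowerBound} with $1<t<d$), and large $s\geq d$ (where $e_s(R_d)=e_{\HK}(R_d)/\cH_s(d)$ and one wants the Watanabe--Yoshida bound $e_{\HK}(R)\geq e_{\HK}(R_d)$ in dimension $\leq 3$, which is known) — actually overlap so that every $s>0$ is covered; this is where the dimension restriction $d\leq 3$ is really used, both because the $e_s(R_d)$ formulas are only tabulated there and because for $s\geq d$ we need the classical Watanabe--Yoshida conjecture, a theorem only up to dimension $4$ (and $6$ in odd characteristic). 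I expect verifying that the thresholds in Lemma~\ref{small s inequality} combined with the $t$-range in Theorem~\ref{thm:MainLowerBound} leave no gap to be the crux, with the $\cH$-inequalities being mechanical once the case division is fixed.
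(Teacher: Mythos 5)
Your overall architecture — splitting by dimension, further splitting the $s$-axis at the breakpoint of $e_s(R_d)$, invoking Lemma~\ref{small s inequality} for small $s$ and Theorem~\ref{thm:MainLowerBound} for the rest — matches the paper's. But two of the specific claims are wrong, and the gap they leave is exactly where the real work of the paper lies.

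First, for $s\geq s_0$ you claim ``it suffices to show $e_s(R)\geq 2/\cH_s(d)$'' and that this follows from Theorem~\ref{thm:MainLowerBound} with $t=\min\{s,d\}$. That sufficient condition is simply not achievable for small $e$: taking $R=R_2$ itself (so $e=2$, $r=1$), Theorem~\ref{thm:MainLowerBound} with $t=3/2$ already only gives $e_s(R)\geq 3/(2\cH_s(2))$, which is the exact value $e_s(R_2)$ and is strictly less than $2/\cH_s(2)$. Worse, with your choice $t=\min\{s,d\}=2$ one gets $(\cH_2(2)-\cH_1(2))\cdot 2=1$, i.e.\ $e_s(R)\geq 1/\cH_s(2)$, which is \emph{weaker} than the target $3/(2\cH_s(2))$ — so your proposed $t$ does not even recover $e_s(R_d)$, let alone the overshoot $2/\cH_s(d)$. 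The actual proof has to aim at the sharp value $e_{\HK}(R_d)/\cH_s(d)$, not $2/\cH_s(d)$, and must optimize over $t$: the paper takes $t=3/2$ (for $e=2$, $d=2$), $t=1$ (for $e=3$, $d=2$), and $t=\sqrt{e+2}/(\sqrt{e+2}-1)$ (for $d=3$), the last choice requiring a genuine algebraic computation to verify $\frac{e(e+2)}{6(\sqrt{e+2}-1)^2}\geq \frac43$ for all $e\geq 2$. That optimization is the crux you identified as ``routine-but-fiddly,'' but it is not routine — your fixed $t$ does not work.

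Second, in the patching step you compute $(\cH_1(d)-r\cH_0(d))e/\cH_s(d)$ as ``$(1-r\cdot 0)e/\cH_s(d)=e/\cH_s(d)$,'' but $\cH_1(d)=1/d!$, not $1$. The correct bound with $t=1$ is $e/(d!\,\cH_s(d))$, which exceeds $2/\cH_s(d)$ only when $e\geq 2d!$ (i.e.\ $e\geq 12$ for $d=3$). Meanwhile the overlap gap — $s$ between the Lemma~\ref{small s inequality} threshold and $s_0$ — already exists as soon as $e>4$ for $d=3$, so the multiplicities $5\leq e\leq 11$ fall through your case analysis entirely. The paper plugs this by handling $e\geq 12$ separately via $e_s(R)\geq e/d!\geq 2$ and treating $2\leq e\leq 11$ uniformly with the $e$-dependent choice of $t$; your proposal does neither. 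The suggestion to fall back on the classical Watanabe--Yoshida theorem for $s\geq d$ also does not close the gap as stated, since $e_s(\fm)=e_{\HK}(R)/\cH_s(d)$ requires $s$ above the $F$-threshold $c_\fm(\fm)$, not merely $s\geq d$.
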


\begin{proof} The case where $d = 1$ follows as $e_s(R)=e(R)\geq 2=e(R_d)=e_s(R_d)$ by Lemma~\ref{lem:lowerestimate}. Suppose $d=2$ and recall  \[e_s(R_2)=\begin{cases} 2-\frac{2\cH_{s-1}(2)}{\cH_s(2)} & \text{if } s\leq \frac{3}{2} \\ \frac{3}{2\cH_s(2)} & \text{if } s\geq \frac{3}{2}.\end{cases}\]
Set $e=e(R)$. If $e\geq 4$, then $e_s(R)\geq \frac{e}{2}\geq 2 =e(R_d) \geq e_s(R_d)$ by Lemma~\ref{lem:lowerestimate} and Theorem~\ref{thm:parameterflat}.  If $e=2$ or $e=3$, then $\frac{\sqrt{e+3}}{\sqrt{e+3}-1}\geq \frac{3}{2}$, and so if $s\leq \frac{3}{2}$ then $e_s(R)\geq 2-\frac{2\cH_{s-1}(2)}{\cH_s(2)}=e_s(R_2)$ by Lemma~\ref{small s inequality}.
If $e=2$ and $s\geq \frac{3}{2}$, then $e_s(R)\geq \left(\frac{\cH_{\frac{3}{2}}(2)-\cH_{\frac{1}{2}}(2)}{\cH_s(2)}\right)2=\frac{3}{2\cH_s(2)}$, and if $e=3$ and $s\geq \frac{3}{2}$, then $e_s(R)\geq \left(\frac{\cH_{1}(2)-2\cH_{0}(2)}{\cH_s(2)}\right)3=\frac{3}{2\cH_s(2)}$ by Theorem~\ref{thm:MainLowerBound}.  This finishes the case $d=2$.

Now suppose that $d=3$. Recall by Example~\ref{xmp:R3}, we have
\[e_s(R_3)=\begin{cases} 2-\frac{2\cH_{s-1}(3)}{\cH_s(3)} & \text{if } s\leq 2 \\ \frac{4}{3\cH_s(2)} & \text{if } s\geq 2.\end{cases}\] 

We first handle cases of large multiplicity. If $e\geq 12$, then $e_s(R)\geq \frac{e}{3!}\geq 2=e(R_d)\geq e_s(R_d)$ by Lemma~\ref{lem:lowerestimate} and Theorem~\ref{thm:parameterflat} so it suffices to assume $e < 12$. Again applying Theorem~\ref{thm:MainLowerBound} we have for all $s$, $$e_s(R) \geq \left(\frac{\cH_s(3)-(e-1)\cH_{s-1}(3)}{\cH_s(3)}\right)e.$$




For all $e \geq 2$, if $s \leq \frac{\sqrt{e+2}}{\sqrt{e+2}-1}$, then $s \leq \frac{\sqrt[3]{e+4}}{\sqrt[3]{e+4}-1}$, and so $e_s(R)\geq e_s(R_d)$ by Lemma~\ref{small s inequality}. Suppose now that $ \frac{\sqrt{e+2}}{\sqrt{e+2}-1} < s$. We again apply Theorem~\ref{thm:parameterflat} and analyize the lower bound. For $t \leq s$ we have 
\[\left(\cH_t(3)-(e-1)\cH_{t-1}(3)\right)e
=\left(\frac{t^3}{6}-\frac{1}{2}(t-1)^2-\frac{e-1}{6}(t-1)^3\right)e=\frac{e}{6}\left(t^3-(e+2)(t-1)^3\right)\]
and so applying this with $t =  \frac{\sqrt{e+2}}{\sqrt{e+2}-1}$ we have 
\begin{eqnarray*}
\left(\cH_t(3)-(e-1)\cH_{t-1}(3)\right)e & =& \frac{e}{6}\left(\left(\frac{\sqrt{e+2}}{\sqrt{e+2}-1}\right)^3-(e+2)\left(\frac{1}{\sqrt{e+2}-1}\right)^3\right)\\
&=&\frac{e(e+2)}{6(\sqrt{e+2}-1)^3}\left(\sqrt{e+2}-1\right)=\frac{e(e+2)}{6(\sqrt{e+2}-1)^2}.\end{eqnarray*} As $t = \frac{\sqrt{e+2}}{\sqrt{e+2}-1} \leq 2$ to show $e_s(R) \geq e_s(R_3)$ for all $t < s$ we must address cases comparing $s$ versus $2$. However, it is straightforward to check that 
\begin{align*}
   \left(\frac{\cH_t(3)-(e-1)\cH_{t-1}(3)}{\cH_s(3)}\right)e  \geq 2-\frac{2\cH_{s-1}(3)}{\cH_s(3)}
    & \text{ if and only if }  \frac{e(e+2)}{6(\sqrt{e+2}-1)^2} \geq 2\cH_s(3)-2\cH_{s-1}(3)\quad \text{and} \\ 
       \left(\frac{\cH_t(3)-(e-1)\cH_{t-1}(3)}{\cH_s(3)}\right)e  \geq \frac{4}{3\cH_s(3)}
    & \text{ if and only if }  \frac{e(e+2)}{6(\sqrt{e+2}-1)^2} \geq \frac{4}{3}.
\end{align*} When $s\leq 2$, $2\cH_s(3)-2\cH_{s-1}(3)=\frac{s^3}{3}-\frac{4}{3}(s-1)^3\leq \frac{4}{3}$.  Thus to show that $e_s(R)\geq e_s(R_3)$ for $t < s$ it suffices to show for all $e \geq 2$ that $\frac{e(e+2)}{6(\sqrt{e+2}-1)^2}\geq \frac{4}{3}$. 
However, this is a simple computation:
\begin{align*}
    \frac{e(e+2)}{(\sqrt{e+2}-1)^2} & = \frac{(e+1)^2-1}{(\sqrt{e+2}-1)^2}
    =(\sqrt{e+2}+1)^2-\frac{1}{(\sqrt{e+2}-1)^2}
    \geq (\sqrt{e+2}+1)^2-1
    \geq (\sqrt{4}+1)^2-1
    =8.
\end{align*}
Therefore, $\frac{e(e+2)}{6(\sqrt{e+2}-1)^2}\geq \frac{4}{3}$ as desired.
\end{proof}

We can say even more in the dimension 2 case.

\begin{thm} Let $R$ be a 2-dimensional unmixed Cohen-Macaulay local ring of prime characteristic that is not $F$-rational. If $s>1$, then $e_s(R) > e_s(V_e)$, where $e=e(R)$ and $V_e$ is the $e$-th Veronese of $k[[x,y]]$.
\end{thm}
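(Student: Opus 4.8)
The plan is to bound $e_s(R)$ from below via Theorem~\ref{thm:MainLowerBound} applied to $I=\fm$, and to compare the resulting bound against the explicit value of $e_s(V_e)$. First I would pass to $R[u]_{\fm R[u]}$ to arrange that $R/\fm$ is infinite; this changes neither $e_s(R)$, $e(R)$, the Cohen-Macaulay property, nor — by faithfully flat descent — non-$F$-rationality, and it does not change $e_s(V_e)$. Then $\fm$ has a minimal reduction $J$ which is a parameter ideal, and since $R$ is Cohen-Macaulay $\lambda(R/J)=e(J)=e(\fm)=e$. As $R$ is Cohen-Macaulay and not $F$-rational, $J\subsetneq J^{*}$, so $\lambda(J^{*}/J)\geq1$, and the estimate in the Remark following Theorem~\ref{thm:MainLowerBound} gives $\mu(\fm/J^{*})\leq\lambda(R/J)-\lambda(J^{*}/J)-\lambda(R/\fm)\leq e-2$. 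Applying Theorem~\ref{thm:MainLowerBound} with $r=e-2$ then yields, for every $1\leq t\leq s$,
\[
e_s(R)\ \geq\ \frac{\bigl(\cH_t(2)-(e-2)\cH_{t-1}(2)\bigr)\,e}{\cH_s(2)}.
\]
Note $e\geq2$ (a two-dimensional Cohen-Macaulay ring of multiplicity one is regular), so $\tfrac{e}{e-1}\leq2$; for $t\in[1,\tfrac{e}{e-1}]\subseteq[1,2]$ one has $\cH_t(2)=\tfrac12(t^2-2(t-1)^2)$ and $\cH_{t-1}(2)=\tfrac12(t-1)^2$, so the numerator-factor above equals $g(t):=\tfrac e2\bigl(t^2-e(t-1)^2\bigr)$. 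A one-line derivative check shows $g$ increases on $[1,\tfrac e{e-1}]$ with $g(\tfrac e{e-1})=\tfrac{e^{2}}{2(e-1)}$.

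Next I would record the value of $e_s(V_e)$. By \cite[Ex.~5.6]{Tay} — equivalently, by the toric computation of Example~\ref{xmp:R3} applied to the affine semigroup ring $V_e=k[[x,y]]^{(e)}$ of $\{(a,b)\in\ZZ_{\geq0}^{2}:e\mid a+b\}$, or by Lemma~\ref{lem:powermaxcomp} via $e_s(V_e)=\tfrac1e\,e_s^{k[[x,y]]}(\fn^e)$ — one has
\[
h_s(V_e)=\frac1e\sum_{m=0}^{e-1}(m+1)\,\cH_{es-m}(2),\qquad e_s(V_e)=\frac{h_s(V_e)}{\cH_s(2)}.
\]
For $s\geq\tfrac{e+1}{e}$ every argument $es-m$ with $0\leq m\leq e-1$ is $\geq2$, so $h_s(V_e)=\tfrac1e\sum_{m=0}^{e-1}(m+1)=\tfrac{e+1}{2}$ (this is the $F$-threshold range, where $h_s(V_e)=e_{\HK}(V_e)$). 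For $1\leq s<\tfrac{e+1}{e}$ only the $m=e-1$ term is $<1$; using $\cH_{e(s-1)+1}(2)=\tfrac12\bigl(1+2e(s-1)-e^{2}(s-1)^{2}\bigr)$ this gives the closed form $h_s(V_e)=\tfrac e2+e(s-1)-\tfrac{e^{2}(s-1)^{2}}{2}$.

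Finally I would compare, always choosing $t=\min\{s,\tfrac e{e-1}\}\in[1,\tfrac e{e-1}]$ in the lower bound and verifying $g(t)>h_s(V_e)$; since $\cH_s(2)>0$ this forces $e_s(R)\geq g(t)/\cH_s(2)>h_s(V_e)/\cH_s(2)=e_s(V_e)$. Three ranges cover $s>1$. If $1<s<\tfrac{e+1}{e}$, take $t=s$ and use the identity $g(s)-h_s(V_e)=\tfrac{e(s-1)^{2}}{2}>0$. If $\tfrac{e+1}{e}\leq s<\tfrac e{e-1}$, take $t=s$; here $h_s(V_e)=\tfrac{e+1}{2}$ and, by monotonicity of $g$, $g(s)\geq g(\tfrac{e+1}{e})=\tfrac{e^{2}+e+1}{2e}=\tfrac{e+1}{2}+\tfrac1{2e}>\tfrac{e+1}{2}$. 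If $s\geq\tfrac e{e-1}$, take $t=\tfrac e{e-1}$; here $h_s(V_e)=\tfrac{e+1}{2}$ and $g(\tfrac e{e-1})=\tfrac{e^{2}}{2(e-1)}=\tfrac{e+1}{2}+\tfrac1{2(e-1)}>\tfrac{e+1}{2}$. This exhausts all $s>1$.

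I expect the main obstacle to be the regime $s\to1^{+}$: there the crude bound $h_s(V_e)\leq\tfrac{e+1}{2}$ is useless, since $h_s(V_e)\to\tfrac e2$ and $e_s(V_e)\to e$, matching the simultaneous degeneration of the lower bound to $e$; one genuinely needs the finer piecewise formula for $h_s(V_e)$ together with the exact cancellation $g(s)-h_s(V_e)=\tfrac{e(s-1)^{2}}{2}$ to certify a strictly positive gap for every $s>1$. A secondary point is that the auxiliary parameter $t$ must be chosen both $\leq s$ and inside the interval $[1,\tfrac e{e-1}]$ on which the $\cH$-expression simplifies to $g(t)$; this is possible precisely because $\tfrac e{e-1}\leq2$, which is where two-dimensionality (rather than general $d$) enters.
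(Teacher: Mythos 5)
Your proof is correct and follows essentially the same route as the paper's: apply Theorem~\ref{thm:MainLowerBound} with $r=e-2$ (legitimate by the non-$F$-rationality remark) and compare the resulting lower bound against the explicit piecewise formula for $e_s(V_e)$. The only cosmetic differences are that you split the range $s>\frac{e+1}{e}$ into two subintervals with different choices of $t$ (where the paper uses $t=\frac{e+1}{e}$ throughout that range), and that you make the strict inequality for $1<s<\frac{e+1}{e}$ explicit via the identity $g(s)-h_s(V_e)=\frac{e(s-1)^2}{2}$, where the paper instead observes directly that $\cH_{s-1}(2)>0$; both are minor and the substance is identical.
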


\begin{proof} We have, by \cite[Ex. 5.6]{Tay}, that 
\[e_s(V_e)=\begin{cases} \frac{e\cH_s(2)-(e^2-e)\cH_{s-1}(2)}{\cH_s(2)} &  0<s\leq \frac{e+1}{e}\\ \frac{e+1}{2\cH_s(2)} & s\geq \frac{e+1}{e}.\end{cases}\]
Suppose $1<s\leq \frac{e+1}{e}$.  Applying Theorem~\ref{thm:MainLowerBound} with $r=e-2$ and $t=s$ we have that
\[e_s(R)\geq \left(\frac{\cH_s(2)-(e-2)\cH_{s-1}(2)}{\cH_s(2)}\right)e>\left(\frac{\cH_s(2)-(e-1)\cH_{s-1}(2)}{\cH_s(2)}\right)e=e_s(V_e).\]
Now suppose that $s>\frac{e+1}{e}$.  We have that
\[\cH_{(e+1)/e}(2)-(e-2)\cH_{1/e}(2)=\frac{(e+1)^2}{2e^2}-\frac{1}{e^2}-\frac{e-2}{2e^2}=\frac{(e+1)^2-e}{2e^2}=\frac{e^2+e+1}{2e^2},\]
and therefore
\[e_s(R)\geq \left(\frac{\cH_{(e+1)/e}(2)-(e-2)\cH_{1/e}(2)}{\cH_s(2)}\right)e=\frac{e^2+e+1}{2e\cH_s(2)}>\frac{e+1}{2\cH_s(2)}.\qedhere\]
\end{proof}
\subsection{Complete intersections}

Much of the work of \cite{AE13} towards the Watanabe-Yoshida conjecture is based on computations first done in \cite{ES05} on complete intersections. Much of the work there carries over to the intermediate $s$-multiplicities nearly verbatim. The key point is to use various complicated changes of variables to directly prove the Watanabe-Yoshida conjecture. For example, when $R = S/f$ is a hypersurface where $S = k[[x_0,\ldots,x_d]]$, \cite[Thm. 3.2]{ES05} shows when $p > 2$ one has $e_{\HK}(R) \geq e_{\HK}(R_d)$ which follows by repeated application of changes of variables, hence the restriction on characteristic, to transform the defining hypersurface into a sum of squares. At each stage, a careful choice of regular element $g$ is made, along with a general field element $\alpha \in k$, and the inequality follows by \cite[Thm. 2.6]{ES05} which guarantees $e_{\HK}( S/(f+\alpha g)) \leq e_{\HK}(R)$. We show next that the proof of \cite[Thm. 2.6]{ES05} goes through for $s$-multiplicity and consequently the $s$-version of \cite[Thm. 3.2]{ES05} immediately follows. 

\begin{thm}\label{thm:shypersurface} Suppose $k$ is algebraically closed of characteristic $p > 2$. Set $S = k[[x_0,\ldots,x_n]]$. Fix $f, g \in (x_0,\ldots,x_n)$ so that $g \notin (f)$. There is a dense set $\Lambda \subset k$, in its finite complement topology, such that $$e_s(S/(f + \alpha g) ) \leq e_s(S/f)$$ for all $\alpha \in \Lambda$ and all $s$. 
\end{thm}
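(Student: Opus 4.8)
The plan is to carry over the proof of \cite[Thm.~2.6]{ES05} essentially line by line, replacing each Hilbert--Kunz colength $\lambda(-/\fm^{[q]})$ by its $s$-truncated analogue $\lambda(-/(\fm^{\lceil sq\rceil}+\fm^{[q]}))$, and to check that the substitution is harmless. Concretely, I would put $A=k[\alpha]$, $S_A=k[\alpha][[x_0,\dots,x_n]]$, and $\fm=(x_0,\dots,x_n)$. Since $g\notin(f)$, the element $F:=f+\alpha g$ does not lie in $(f)S_A$; in particular it is a nonzero element of the domain $S_A$, so $F$ is a nonzerodivisor and $\mathcal R:=S_A/(F)$ is flat over $A$, with closed fibres the complete local rings $R_{\alpha_0}:=S/(f+\alpha_0 g)$ $(\alpha_0\in k)$ and generic fibre $R_\eta:=k(\alpha)[[x_0,\dots,x_n]]/(f+\alpha g)$, an $F$-finite complete local ring over $k(\alpha)$, so $h_s(R_\eta)$ is defined. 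Because the normalizing factor $\cH_s(n)$ is a purely combinatorial quantity with no reference to the coefficient field, it is enough to compare the functions $h_s$.

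\textit{The generic fibre is no worse than $S/f$.} Fix a prime power $q=p^e$ and set $\mathfrak a_q:=\fm^{\lceil sq\rceil}+\fm^{[q]}$; then $V_q:=S/\mathfrak a_q$ is a finite-dimensional $k$-vector space, say of dimension $N_q$, and
\[
\mathcal R/\mathfrak a_q\mathcal R=\operatorname{coker}(V_q\otimes_k A\xrightarrow{f+\alpha g}V_q\otimes_k A)=\operatorname{coker}(\Phi_q+\alpha\Psi_q),
\]
where $\Phi_q,\Psi_q\in\operatorname{End}_k(V_q)$ are multiplication by $f$ and by $g$. The rank of a matrix with entries in $A$ can only drop under specialization, so $\operatorname{rank}_{k(\alpha)}(\Phi_q+\alpha\Psi_q)\ge\operatorname{rank}_k\Phi_q$ (specialize $\alpha\mapsto 0$), and passing to cokernels gives
\[
\lambda_{k(\alpha)}(R_\eta/\mathfrak a_q R_\eta)\ \le\ \lambda_k(S/((f)+\mathfrak a_q))=\lambda(R/(\fm^{\lceil sq\rceil}+\fm^{[q]})),\qquad R:=S/f.
\]
Dividing by $q^{n}$ and letting $q\to\infty$ yields $h_s(R_\eta)\le h_s(S/f)$, hence $e_s(R_\eta)\le e_s(S/f)$. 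This is the only "computational" step, and it is visibly insensitive to the exact shape of $\mathfrak a_q$.

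\textit{Specializing back, and the obstacle.} For each $q$ and each $s$, the fibre dimension $\alpha_0\mapsto\lambda_k(R_{\alpha_0}/\mathfrak a_q R_{\alpha_0})=\dim_k\operatorname{coker}(\Phi_q+\alpha_0\Psi_q)$ of the finitely generated $A$-module $\mathcal R/\mathfrak a_q\mathcal R$ is upper semicontinuous, so the locus $U_{q,s}\subset\AA^1_k$ where it is $\le\lambda(R/(\fm^{\lceil sq\rceil}+\fm^{[q]}))$ is a nonempty Zariski open (it contains $\alpha_0=0$ and the generic point), hence cofinite. Taking $\Lambda$ to be the set of closed points of $\bigcap_{q,s}U_{q,s}$ --- a countable intersection of cofinite subsets of $\AA^1_k$ --- one gets, for $\alpha_0\in\Lambda$ and every $s$, $h_s(R_{\alpha_0})=\lim_q\lambda_k(R_{\alpha_0}/\mathfrak a_q R_{\alpha_0})/q^{n}\le h_s(S/f)$, so $e_s(R_{\alpha_0})\le e_s(S/f)$. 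The one genuinely delicate point --- and the substance of the theorem --- is that $\Lambda$ is still dense in the finite-complement topology on $k$, i.e.\ that the bad loci $\AA^1_k\setminus U_{q,s}$ do not exhaust $\AA^1_k$; this is exactly the semicontinuity mechanism of \cite{ES05} (which uses flatness of $\mathcal R$ over $A$, the upper semicontinuity just invoked, and the eventual polynomiality in $q$ of the colengths, possibly after the harmless replacement of $k$ by a larger field, under which $e_s$ of a fixed ring is unchanged), and the content of the present theorem is precisely that this mechanism uses nothing about $\fm^{[q]}$ that is not equally true of $\fm^{\lceil sq\rceil}+\fm^{[q]}$. Finally, $k=\bar k$ enters only to identify closed points of the relevant open with elements of $k$, and $p>2$ plays no role in the statement itself --- it is carried along for the hypersurface application, the $s$-analogue of \cite[Thm.~3.2]{ES05}, which then follows by the same sequence of changes of coordinates once the present theorem is in hand.
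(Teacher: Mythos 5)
Your proposal is correct and takes essentially the same approach as the paper: both deform $f$ along the one-parameter family $f+tg$, observe for each $q$ that the colength $\lambda\bigl(R_{\alpha}/(\fm^{\lceil sq\rceil}+\fm^{[q]})\bigr)$ is upper semicontinuous in the parameter (you via rank-dropping of the presentation matrix $\Phi_q+\alpha\Psi_q$, the paper via Nakayama and minimal generators of the stalk $A_{(t-\alpha)}$), and take the countable intersection of the cofinite good loci $\Lambda_q$. The detour through the generic fiber $R_\eta$ is a mild repackaging of the same semicontinuity mechanism, and as you note, the only point to check is that nothing about replacing $\fm^{[q]}$ by $\fm^{\lceil sq\rceil}+\fm^{[q]}$ disturbs it.
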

\begin{proof} This is an obvious translation of the proof of \cite[Thm. 3.2]{ES05}, but we give the details. Set $R = S[t]/(f+tg)$, $\fm=(x_0,\ldots,x_n)$. For $\alpha \in k$, set $\fm_\alpha = (x_0,\ldots,x_n,t-\alpha)$, so $\fm_0 = \fm$. For each fixed $q$,  $A = R/ (\fm^{ \lceil sq \rceil} + \fm^{[q]})$.  For $\alpha \in k$, $A/(t-\alpha) \cong R/( \fm_\alpha^{\lceil sq \rceil} + \fm_\alpha^{[q]} + (t-\alpha) )$ is Artinian and Nakayama's lemma gives its $k$-dimension is the minimal number of generators of $A$ localized away from the prime ideal $(t-\alpha)$, i.e., $A_{(t-\alpha)}$. Starting with a minimal set of generators $A_{(t)}$, i.e., for which $\alpha = 0$, we may find an open set $\Lambda_q$ in $\AA_k^1$ over which these generators also generate $A_{(t-\alpha)}$, possibly non-minimally, for all $(t-\alpha) \in \Lambda_q$. We may freely identify $\AA_k^1$ with $k$ by identifying $(t-\alpha)$ and $\alpha$, for which the Zariski topology on $\AA_k^1$ is the finite complement topology on $k$. Now setting $\Lambda = \bigcap \Lambda_q$, one has for all $\alpha \in \Lambda$ and all $q$, $$\lambda( R/ ( \fm_\alpha^{\lceil sq \rceil} + \fm_\alpha^{[q]} + (t-\alpha) )) \leq \lambda(R/ (\fm^{\lceil sq \rceil} + \fm^{[q]} + (t) )).$$ Taking limits as $q$ goes to infinity and normalizing give the result. 
\end{proof}

\begin{cor}
For any $d$-dimensional singular hypersurface $R \cong k[[x_0,\ldots,x_n]]/f$ over an uncountable algebraically closed field $k$ of characteristic $p > 2$, $e_s( R) \geq e_s(R_d)$ for all $s > 0$.
\end{cor}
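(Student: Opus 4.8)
The plan is to transcribe the proof of \cite[Thm.~3.2]{ES05} into the $s$-multiplicity setting; this is possible precisely because Theorem~\ref{thm:shypersurface} is the $s$-analogue of the deformation estimate \cite[Thm.~2.6]{ES05}. First I would record the normalizations. Since $R$ is a $d$-dimensional hypersurface we have $f\neq 0$, so $S/f$ has dimension $n$ and hence $d=n$; since $R$ is singular we have $f\in\fm^2$ with $\fm=(x_0,\dots,x_n)$ (a nonzero linear term in $f$ would let $f$ be taken as a coordinate, making $R\cong k[[x_0,\dots,x_{n-1}]]$ regular); and $R_d=S/(x_0^2+\cdots+x_n^2)$ for $S=k[[x_0,\dots,x_n]]$, the same power series ring. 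The quantity $e_s(-)$ depends only on the isomorphism class of the complete local ring, so any formal change of coordinates of $S$ leaves $e_s(S/f)$ unchanged; meanwhile Theorem~\ref{thm:shypersurface} guarantees that for any $g\in\fm$ with $g\notin(f)$ there is a dense set of $\alpha\in k$ with $e_s(S/(f+\alpha g))\le e_s(S/f)$ for every $s>0$ — here the uncountability of $k$ is what makes that set nonempty, since it is a countable intersection (one cofinite condition per Frobenius power) of cofinite subsets of $k$. The goal is then to connect $f$ to $x_0^2+\cdots+x_n^2$ by a finite sequence of moves, each one either a coordinate change or such a deformation.

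Concretely, let $q(f)$ denote the rank of the degree-two homogeneous part $f_2$ of $f$, with the convention $q(f)=0$ when $\operatorname{ord}(f)\ge 3$. If $q(f)=n+1$, then $f_2$ is a nondegenerate quadratic form over the algebraically closed field $k$ in which $2$ is invertible, so the formal splitting (Morse) lemma gives $S/f\cong S/f_2\cong R_d$ and we are finished. Otherwise $q(f)\le n$, and I would raise $q$ by one while not increasing $e_s$: after a linear change of coordinates assume $f_2=x_0^2+\cdots+x_{r-1}^2$ with $r=q(f)$, and set $g=x_r^2$ (or $g=x_0^2$ if $\operatorname{ord}(f)\ge 3$). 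Then $g\in\fm$ but $g\notin(f)$, because $g$ has order $2$, so $g=fh$ would force $h$ to be a unit and $f_2$ to be a scalar multiple of $x_r^2$, which it is not. Moreover $\operatorname{ord}(f+\alpha g)=2$ with quadratic part of rank $r+1$ for every $\alpha\neq 0$, and $S/(f+\alpha g)$ is again a $d$-dimensional singular hypersurface. Picking $\alpha\neq 0$ in the dense set from Theorem~\ref{thm:shypersurface}, replacing $f$ by $f+\alpha g$, and re-diagonalizing (rescaling $x_r$ by a square root taken in $k=\bar k$), we obtain a hypersurface with $q$ strictly larger and $e_s$ no larger, simultaneously for all $s$.

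Iterating at most $n+1$ times terminates at a hypersurface of full quadratic rank, analytically isomorphic to $R_d$; since each step either preserves $e_s$ (coordinate change) or weakly decreases it uniformly in $s$ (Theorem~\ref{thm:shypersurface}), chaining the inequalities yields $e_s(R)\ge e_s(R_d)$ for all $s>0$, with an analytic copy of $R_d$ produced along the way. The only essential new input is Theorem~\ref{thm:shypersurface}; the rest is the coordinate-change bookkeeping of \cite{ES05}. The steps needing the most care are checking that every deformation invoked is genuinely of the form handled by Theorem~\ref{thm:shypersurface} — i.e. that $g\notin(f)$ at each stage and that passing to $f+\alpha g$ preserves dimension and singularity — and confirming that the formal Morse lemma holds in characteristic $p>2$, so that a nondegenerate quadratic part really does produce $R_d$; both are standard, but both deserve to be stated explicitly.
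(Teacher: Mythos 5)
Your proposal is correct and takes precisely the approach the paper has in mind: the paper's own proof simply cites the argument of \cite[Thm.~3.2]{ES05} with Theorem~\ref{thm:shypersurface} substituted for the deformation estimate \cite[Thm.~2.6]{ES05}, and you have carried out exactly that substitution, spelling out the rank-raising induction via $g=x_r^2$, the verification $g\notin(f)$, and the invocation of the formal splitting lemma in characteristic $p>2$.
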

\begin{proof}
This just follows the proof \cite[Thm. 3.2]{ES05} using the same changes of variables and replacing \cite[Thm. 2.6]{ES05} by Theorem~\ref{thm:shypersurface}.
\end{proof}

\begin{cor}\label{cor:CIconj}
Fix $d \geq 2$ and $k$ a field $k$ of characteristic $p > 2$. For $R$ a singular complete intersection of dimension $d$, $e_s(R) \geq e_s(R_d)$  for all $s > 0$.
\end{cor}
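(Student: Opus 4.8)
The plan is to deduce the statement from the hypersurface case (the preceding corollary) by an induction on the codimension of $R$, the engine being a complete-intersection analogue of Theorem~\ref{thm:shypersurface}; a base change disposes of the weaker hypothesis on $k$ here.

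\emph{Reduction of the ground field.} Completing and taking a minimal presentation, write $R = S/(f_1,\ldots,f_c)$ with $S = k[[x_1,\ldots,x_m]]$ regular, $f_1,\ldots,f_c$ a regular sequence contained in $\fm^2$, and $m = d + c$; here $c \geq 1$ because $R$ is singular. For any field extension $K \supseteq k$, the ring $R_K := K[[x_1,\ldots,x_m]]/(f_1,\ldots,f_c)$ is again a complete local complete intersection of dimension $d$ (the $f_i$ remain a regular sequence over $K[[x_1,\ldots,x_m]]$), the map $R \to R_K$ is faithfully flat, $R_K$ is still singular (regularity descends along $R \to R_K$), and $e_s(R_K) = e_s(R)$ for every $s$: for each $q$, $R_K/(\fm_{R_K}^{\lceil sq\rceil} + \fm_{R_K}^{[q]})$ is the base change along $k \to K$ of $R/(\fm^{\lceil sq\rceil} + \fm^{[q]})$, so the two have equal length, while $\cH_s(d)$ depends only on $d$. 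Choosing $K$ an uncountable algebraically closed extension of $k$, we may assume $k$ is uncountable and algebraically closed, which is the setting of the preceding corollary.

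\emph{Induction on $c$.} If $c = 1$ then $R$ is a singular hypersurface and the statement is the preceding corollary. Suppose $c \geq 2$, and set $g := x_1 \in \fm$; since $g$ has a nonzero linear part while $(f_1,\ldots,f_c) \subseteq \fm^2$, we have $g \notin (f_1,\ldots,f_c)$. The proof of Theorem~\ref{thm:shypersurface} goes through verbatim with $S[t]/(f + tg)$ replaced by $S[t]/(f_1,\ldots,f_{c-1},f_c + tg)$ --- here $f_1,\ldots,f_{c-1},f_c + tg$ is a regular sequence in $S[[t]]$ by a comparison of lowest-order terms in $t$ --- and yields a dense set $\Lambda \subseteq k$ with $e_s\big(S/(f_1,\ldots,f_{c-1},f_c + \alpha g)\big) \leq e_s(R)$ for all $\alpha \in \Lambda$ and all $s > 0$. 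For all but finitely many $\alpha$ the sequence $f_1,\ldots,f_{c-1},f_c + \alpha g$ is again regular --- it fails only when $f_c + \alpha g$ lies in one of the finitely many associated primes of $(f_1,\ldots,f_{c-1})$, and since $f_c$ lies in none of them this excludes at most one $\alpha$ per such prime --- so we may choose $\alpha \in \Lambda$ with $R' := S/(f_1,\ldots,f_{c-1},f_c + \alpha g)$ a complete intersection. As $f_c + \alpha g$ has nonzero linear part $\alpha x_1$, a change of variables makes it a coordinate, and deleting that variable exhibits $R'$ as a complete intersection of dimension $d$ and codimension $c - 1$ with defining equations again in the square of the maximal ideal; since $c - 1 \geq 1$, $R'$ is singular. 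The inductive hypothesis gives $e_s(R') \geq e_s(R_d)$, and hence $e_s(R) \geq e_s(R') \geq e_s(R_d)$.

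\emph{Main obstacle.} Granting the hypersurface corollary, the argument is essentially formal, and the care required is concentrated in two checks: that the proof of Theorem~\ref{thm:shypersurface} genuinely applies to the complete-intersection family $S[t]/(f_1,\ldots,f_{c-1},f_c + tg)$ --- which needs only the observation that this is again a regular sequence --- and that, after the generic perturbation, $R'$ remains a complete intersection of dimension $d$ (so the induction continues) while also being singular (so the inductive hypothesis applies). Both are handled above. Note that, in contrast with the equality-case analysis of \cite{ES05}, the inequality uses only one generic perturbation per codimension; were one to pursue the rigidity statement of Question~\ref{que:BigQuestion}(2) as well, the heavier change-of-variables bookkeeping of \cite{ES05} would re-enter, and checking its compatibility with $e_s$ --- again via Theorem~\ref{thm:shypersurface} --- would be the real work.
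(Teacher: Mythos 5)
Your argument is correct, and it does more than the paper's own proof, which simply defers to the structure of \cite[Thm.~4.6]{ES05}. Let me compare the two. The paper's proof says, in effect, ``replay ES05's chain of coordinate changes and Weierstrass-preparation arguments, substituting Theorem~\ref{thm:shypersurface} wherever ES05 invokes \cite[Thm.~2.6]{ES05}''; it gives no details. You instead supply a self-contained induction on codimension: perturb the last relation $f_c$ to $f_c + \alpha x_1$, invoke the complete-intersection form of Theorem~\ref{thm:shypersurface} to get $e_s(R')\le e_s(R)$ for generic $\alpha$, observe that for generic nonzero $\alpha$ the perturbed relation has nonzero linear part and can be absorbed into a coordinate, lowering the codimension while keeping $R'$ a singular complete intersection of dimension $d$, and finish by the inductive hypothesis with the hypersurface corollary as the base case. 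This is precisely an $s$-analogue of the reduction the paper alludes to (``one reduces to giving an $s$-analogue of \cite[Prop.~4.2]{ES05}''), but organized so that each codimension step uses a single linear perturbation followed by elimination via the inverse function theorem rather than Weierstrass preparation. All the side checks you flag are handled correctly: the Artinian quotients commute with base change so $e_s$ is unchanged under $k\to K$, hence you may assume $k$ uncountable and algebraically closed; $g=x_1\notin(f_1,\ldots,f_c)\subseteq\fm^2$; the perturbed sequence stays regular away from finitely many $\alpha$ because $f_c$ avoids every associated prime of $(f_1,\ldots,f_{c-1})$; and after eliminating the new coordinate the remaining relations still lie in the square of the maximal ideal, so $R'$ is singular (embedding dimension $d+c-1>d$). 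One small note: the parenthetical claim that $f_1,\ldots,f_{c-1},f_c+tg$ is a regular sequence in $S[[t]]$ is true but not actually needed --- the semicontinuity argument in Theorem~\ref{thm:shypersurface} only uses that the relevant Artinian quotient is a finitely generated $k[t]$-module, which holds regardless --- and in any case the family lives over $S[t]$, not $S[[t]]$, so keep the ambient ring consistent with Theorem~\ref{thm:shypersurface}. Your closing remark distinguishing the inequality from the rigidity statement is also well taken and explains why your bookkeeping can be so much lighter than ES05's.
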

\begin{proof}
This follows the same proof as \cite[Thm. 4.6]{ES05}. Its standard to assume $k$ is uncountable and algebraically closed by base change. One simply replaces \cite[Thm. 3.2]{ES05} with Theorem~\ref{thm:shypersurface} noting that up to very delicate changes of variables and Weierstrass preparation arguments, none of which depends on the lengths calculating the Hilbert-Kunz multiplicity, one reduces to giving an $s$-analogue of \cite[Prop. 4.2]{ES05}. However, each step there again holds replacing again the proof of \cite[Thm. 2.6]{ES05} by the proof of Theorem~\ref{thm:shypersurface}.
\end{proof}

\subsection{An alternate lower bound in dimension $4$:} The main obstruction to pushing these results farther is that when $d \geq 4$, $R_d$ is no longer a toric ring, and hence it is not clear how to compute $e_s(R_d)$ for such $d$. Nonetheless, the computations for $e_s(R_d)$ for $d \leq 3$ suggest a general form for the function $s \mapsto e_s(R_d)$. In particular, consider the function 
\[\phi(s,d) := \begin{cases}  2-\frac{2\cH_{s-1}(d)}{\cH_s(d)}& \text{if } s\leq \frac{d+1}{2} \\ \frac{2\cH_{(d+1)/2}(d)-2\cH_{(d-1)/2}(d)}{\cH_s(d)} & \text{if } s\geq \frac{d+1}{2}.\end{cases}\]

We note in particular that $e_s(R_4) \neq \phi(s,4)$ as the former depends on the characteristic. However, we also have no direct comparison between these two functions. Either way, one can ask if $e_s(R) \geq \phi(s,d)$ for unmixed local rings of dimension $d$. We can establish that this holds for $d = 4$. 

\begin{lem}\label{difference descriptions} Let $d,r\in \NN_{\geq 1}$, and let $f_{d,r}:\RR_{>0}\to \RR$ be given by $f_{d,r}(s)=\cH_s(d)-r\cH_{s-1}(d)$.  There exists $t$ with $1\leq t\leq \frac{d+1}{2}$ such that $f_{d,r}$ is strictly increasing on the interval $(0,t)$ and strictly decreasing on  the interval $(t,d+1)$.
\end{lem}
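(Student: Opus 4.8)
The plan is to analyze $f_{d,r}$ directly from the polynomial description of $\cH_s(d)$ on each unit interval $[\lceil s\rceil - 1, \lceil s\rceil)$, since on such an interval $\cH_s(d)$ is a genuine polynomial in $s$ of degree $d$. Recall that for $s < d$ we have $\cH_s(d) < 1$ and $\cH_s(d) = 1$ for $s \geq d$, and that $\cH_s(d)$ is $C^{d-1}$ and increasing. First I would record that on the interval $(0,1]$ we have $\cH_{s-1}(d) = 0$ (interpreting $\cH_s(d)$ as the zero function for $s \leq 0$, consistent with the empty sum when $\lfloor s\rfloor < 0$), so $f_{d,r}(s) = \cH_s(d) = s^d/d!$ there, which is strictly increasing; thus we may restrict attention to $s > 1$. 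On the interval where $s$ and $s-1$ lie in consecutive "pieces", differentiate: using the known identity $\frac{d}{ds}\cH_s(d) = \cH_s(d-1)$ (which follows by differentiating the defining sum termwise), we get $f_{d,r}'(s) = \cH_s(d-1) - r\,\cH_{s-1}(d-1)$, i.e. $f_{d,r}' = f_{d-1,r}$ up to the shift in dimension. This is the crucial structural observation: the derivative of $f_{d,r}$ is $f_{d-1,r}$, so the whole statement should fall out of an induction on $d$.

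The base case $d=1$: here $\cH_s(1) = s$ for $0 \le s \le 1$ and $\cH_s(1)=1$ for $s \ge 1$, so $f_{1,r}(s) = s$ for $s\le 1$, $f_{1,r}(s) = 1 - r(s-1)$ for $1 \le s \le 2$, and $f_{1,r}(s)=1-r$ for $s\ge 2$. With $r \ge 1$ this is strictly increasing on $(0,1)$ and strictly decreasing on $(1,2)$, so $t=1 = \frac{d+1}{2}$ works. For the inductive step, assume $f_{d-1,r}$ is strictly increasing on $(0,t')$ and strictly decreasing on $(t',d)$ for some $t' \in [1, d/2]$. Then $f_{d,r}' = f_{d-1,r}$ is positive on $(0,t')$, so $f_{d,r}$ is strictly increasing there; on $(t',d)$, $f_{d,r}'$ is strictly decreasing, and I need to locate where it crosses zero. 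Set $t$ to be that crossing point (if it exists in $(t', d+1)$); since $f_{d,r}'$ is continuous and strictly decreasing on $(t', d+1)$, there is at most one such $t$, and then $f_{d,r}$ is strictly increasing on $(0,t)$ and strictly decreasing on $(t, d+1)$. I would also need to check that the zero actually lies in the claimed range and that $t \le \frac{d+1}{2}$; the bound $t \le \frac{d+1}{2}$ should follow from evaluating $f_{d,r}'= f_{d-1,r}$ at $s = \frac{d+1}{2}$ and checking it is $\le 0$, using $r \ge 1$ together with the symmetry-flavored inequality $\cH_{(d-1)/2}(d-1) \ge \cH_{(d+1)/2 - 1}(d-1)$... wait, those are equal, so more carefully: at $s = \frac{d+1}{2}$, $f_{d-1,r}(s) = \cH_{(d+1)/2}(d-1) - r\,\cH_{(d-1)/2}(d-1)$, and one checks $\cH_{(d+1)/2}(d-1) \le \cH_{(d-1)/2}(d-1)$ is false (it's increasing), so instead I would use the inductive hypothesis directly: $\frac{d+1}{2} \ge t' + \tfrac12 > t'$ lies in the decreasing range of $f_{d-1,r}$, and compare against a point where $f_{d-1,r} \le 0$.

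The main obstacle I anticipate is exactly this last point: showing the sign change of $f_{d,r}'$ (equivalently, the zero of $f_{d-1,r}$) occurs at or before $s = \frac{d+1}{2}$, and handling the degenerate cases where $f_{d,r}'$ has no zero in $(t', d+1)$ (in which case $f_{d,r}$ would be monotone and one must verify the claimed shape still holds with an appropriate choice of $t$, e.g. $t = t'$ or $t=1$). A clean way around this is to prove the sharper inductive statement "$f_{d,r}$ is strictly increasing then strictly decreasing, with the turning point $t_d \in [1, \frac{d+1}{2}]$, and moreover $f_{d,r}(\frac{d+1}{2} + c) \le $ something negative" — i.e. strengthen the hypothesis enough that the derivative $f_{d-1,r}$ is genuinely negative somewhere past $\frac{d+1}{2}-\tfrac12$, forcing the zero of $f_{d-1,r}$, hence the turning point $t_d$ of $f_{d,r}$, to sit in the right place. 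The needed inequalities among the $\cH$-values at half-integers are elementary but must be set up carefully so the induction closes; I expect this bookkeeping, rather than any deep idea, to be where the real work lies.
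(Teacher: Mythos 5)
Your structural plan (induct on $d$, differentiate $f_{d,r}$, and reduce to one dimension lower) is the right idea and matches the paper, but the key identity you use is wrong. Differentiating the defining sum for $\cH_s(d)$ termwise gives
\[
\frac{d}{ds}\cH_s(d)=\sum_{i=0}^{\lfloor s\rfloor}\frac{(-1)^i}{(d-1)!}\binom{d}{i}(s-i)^{d-1},
\]
and using $\binom{d}{i}=\binom{d-1}{i}+\binom{d-1}{i-1}$ one finds $\frac{d}{ds}\cH_s(d)=\cH_s(d-1)-\cH_{s-1}(d-1)$, not $\cH_s(d-1)$. (Equivalently, $\cH_s(d)=\int_{s-1}^{s}\cH_u(d-1)\,du$.) So the correct relation is $f'_{d,r}(s)=f_{d-1,r}(s)-f_{d-1,r}(s-1)$, not $f'_{d,r}=f_{d-1,r}$. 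This changes the logic substantially: with the true formula, the sign of $f'_{d,r}(s)$ is controlled by comparing $f_{d-1,r}$ at $s$ and at $s-1$, and the unimodality of $f_{d-1,r}$ (increasing then decreasing then constant) yields a \emph{unique} $t\in(1,d+1)$ with $f_{d-1,r}(t)=f_{d-1,r}(t-1)$, hence a unique interior extremum of $f_{d,r}$, which is a maximum because $f_{d,r}$ is increasing near $0$. Your version, relying on $f'_{d,r}=f_{d-1,r}$, locates the turning point by a zero of $f_{d-1,r}$ itself, which is simply not what the derivative equals; the downstream reasoning (including the half-integer comparisons) does not survive the correction.

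The bound $t\le\frac{d+1}{2}$ is also handled differently in the paper than you sketch: it is a separate induction on $r$. For $r=1$ one uses the symmetry $\cH_s(d)=1-\cH_{d-s}(d)$ to show $f_{d,1}(t)=f_{d,1}(d-t+1)$, forcing the unique maximum to be at $t=\frac{d+1}{2}$; for $r\ge2$ one compares $f_{d,r}(s')-f_{d,r}(s)=f_{d,r-1}(s')-f_{d,r-1}(s)-\bigl(\cH_{s'}(d)-\cH_s(d)\bigr)$ for $t<s<s'$, which is negative since $\cH$ is increasing, so the maximum of $f_{d,r}$ can only move left as $r$ increases. Your proposal correctly anticipates that this bound is the delicate part, but the route you suggest (evaluating the derivative at $\frac{d+1}{2}$ and hoping for a sign) does not close, as you yourself noticed when the comparison you wanted turned out to be an equality. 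Fixing the derivative identity and adopting the paper's two separate inductions (on $d$ for unimodality, on $r$ for the location bound) is the way through.
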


\begin{proof} We proceed by induction on $d$.  If $d=1$, then
\[f_{d,r}(s)=\begin{cases} s &\text{if } s\leq 1  \\ 1-r(s-1) & \text{if } 1\leq s \leq 2 \\ 1-r & \text{if } s\geq 2\end{cases}\]
so the statement is proved by taking $t=1$.  Let $d\geq 2$ and suppose we know that there exists $0<u\leq \frac{d}{2}$ such that $f_{d-1,r}(s)$ is strictly increasing on $(0,u)$ and strictly decreasing on $(u,d)$.  Let $t\in (0,d+1)$ be a local extremum of $f_{d,r}$, which immediately implies that $t\geq 1$ since $f_{d,r}(s)$ is strictly increasing for $0<s<1$.  The function $f_{d,r}$ is differentiable with continuous derivative, and so $0=f'_{d,r}(t)=f_{d-1,r}(t)-f_{d-1,r}(t-1)$.   Since $f_{d-1,r}(s)$ is strictly increasing on $(0,u)$, strictly decreasing on $(u,d)$, and constant on $(d,\infty)$, there is a unique $t$ such that $1\leq t\leq d+1$ and $f_{d-1,r}(t)=f_{d-1,r}(t-1)$.  Therefore $t$ is the unique extremum of $f_{d,r}$ on $(0,d+1)$, and it must be a maximum since $f_{d,r}(s)$ is increasing for $s<1$.

To prove that $t$ must be no more than $\frac{d+1}{2}$, we proceed by induction on $r$.  If $r=1$ and $t$ is the local maximum for $f_{d,r}$, then
\[f_{d,1}(t)=\cH_t(d)-\cH_{t-1}(d)=1-\cH_{d-t}(d)-(1-\cH_{d-t+1}(d))=\cH_{d-t+1}(d)-\cH_{d-t}(d)=f_{d,1}(d-t+1).\]
Since $f_{d,1}$ has only one local maximum, $t$ must equal $d-t+1$, and so $t=\frac{d+1}{2}$.  Now suppose that the maximum value for $f_{d,r-1}$ occurs at a point $t\leq \frac{d+1}{2}$ for some $r\geq 2$.  Let $s,s'\in (0,d+1)$ such that $t<s<s'$.  In this case,
\[f_{d,r}(s')-f_{d,r}(s)=f_{d,r-1}(s')-f_{d,r-1}(s)-\cH_{s'}(d)+\cH_{s}(d)<0.\]
Therefore $f_{d,r}$ is decreasing on the interval $(t,d+1)$, and so its maximum must occur at a point less than or equal to $t$, which finishes the induction.
\end{proof}

\begin{thm} For singular Cohen-Macaulay unmxied local rings $R$ of dimension $4$, $e_s(R) \geq \phi(s,4) $. 
\end{thm}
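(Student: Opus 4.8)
The plan is to imitate the structure of the proof of Theorem~\ref{sWY dim<4}, applying Theorem~\ref{thm:MainLowerBound} with $I=\fm$ and $r=e-1$, where $e:=e(R)$; this is legitimate by the Remark following that theorem, since $\mu(\fm/J^\ast)\le e-1$. Writing $f_{d,r}(t):=\cH_t(d)-r\cH_{t-1}(d)$ as in Lemma~\ref{difference descriptions}, the bound reads $e_s(R)\ge\frac{f_{4,e-1}(t)}{\cH_s(4)}\,e$ for every $1\le t\le s$. Two preliminary observations drive everything. First, $\phi(s,4)\,\cH_s(4)\le 2\cH_{5/2}(4)-2\cH_{3/2}(4)=\tfrac{115}{96}$ for all $s>0$: this is an equality on the branch $s\ge\tfrac52$, and for $s\le\tfrac52$ the left side equals $2f_{4,1}(s)\le 2f_{4,1}(\tfrac52)$ by the monotonicity of $f_{4,1}$ in Lemma~\ref{difference descriptions}. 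Second, $R$ singular Cohen--Macaulay forces $e\ge 2$, and for $s\le 1$ the containments $\fm^{[q]}\subseteq\fm^q\subseteq\fm^{\lceil sq\rceil}$ give $h_s(\fm)=\tfrac{s^4}{24}e$, hence $e_s(R)=e\ge 2=\phi(s,4)$. So we may assume $s>1$, where $\phi(s,4)=2-\tfrac{2\cH_{s-1}(4)}{\cH_s(4)}$ on $1<s\le\tfrac52$; it then suffices, for each $e$, to produce a pivot $t\in[1,s]$ with $f_{4,e-1}(t)\,e\ge\tfrac{115}{96}$ valid for $s$ beyond some threshold, and to dispatch the remaining $s$ directly.

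By Lemma~\ref{difference descriptions}, $f_{4,e-1}$ is strictly increasing up to a unique maximizer $t^\ast\le\tfrac52$ and then strictly decreasing, so the best available pivot at $s>1$ is $t=\min(s,t^\ast)$. When $e\ge5$ the maximizer lies in $[1,2]$ and, exactly as in the closed-form computation inside the proof of Theorem~\ref{sWY dim<4}, solves $(t^\ast)^3=(e+3)(t^\ast-1)^3$, i.e.\ $t^\ast=\frac{\sqrt[3]{e+3}}{\sqrt[3]{e+3}-1}$; using $\cH_t(4)=\frac{1}{24}(t^4-4(t-1)^4)$ and $\cH_{t-1}(4)=\frac{(t-1)^4}{24}$ on $[1,2]$, a one-line telescoping gives $f_{4,e-1}(t^\ast)=\frac{e+3}{24(\sqrt[3]{e+3}-1)^3}$. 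Writing $u=\sqrt[3]{e+3}\ge2$, the quantity $f_{4,e-1}(t^\ast)\,e=\frac{u^3(u^3-3)}{24(u-1)^3}$ is increasing on $[2,\infty)$ (its logarithmic derivative has the sign of $u^4-2u^3+3>0$), so $f_{4,e-1}(t^\ast)\,e\ge\tfrac{40}{24}=\tfrac53$. Hence for $e\ge5$ and $s\ge t^\ast$ we get $e_s(R)\ge\frac{f_{4,e-1}(t^\ast)e}{\cH_s(4)}\ge\frac{5/3}{\cH_s(4)}\ge\frac{115/96}{\cH_s(4)}\ge\phi(s,4)$. For $1<s<t^\ast$ (which forces $s<2$, so all colengths sit on their first polynomial piece) the pivot $t=s$ gives $e_s(R)\ge\frac{f_{4,e-1}(s)e}{\cH_s(4)}$, and after clearing denominators and cancelling $e-2>0$ the inequality $e_s(R)\ge\phi(s,4)$ reduces to $s^4\ge(e+5)(s-1)^4$, i.e.\ $s\le\frac{\sqrt[4]{e+5}}{\sqrt[4]{e+5}-1}$; this holds because $s<t^\ast=\frac{\sqrt[3]{e+3}}{\sqrt[3]{e+3}-1}\le\frac{\sqrt[4]{e+5}}{\sqrt[4]{e+5}-1}$, the last step being the elementary inequality $(e+3)^4\ge(e+5)^3$.

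For the remaining multiplicities $e\in\{2,3,4\}$, where $t^\ast\in(2,\tfrac52]$ has no comfortable closed form, I would use cheaper pivots. For $e=2$ take $r=1$: if $1<s\le\tfrac52$ use $t=s$, making Theorem~\ref{thm:MainLowerBound} give $e_s(R)\ge 2-\tfrac{2\cH_{s-1}(4)}{\cH_s(4)}=\phi(s,4)$ directly; if $s>\tfrac52$ use $t=\tfrac52$, giving $e_s(R)\ge\frac{2(\cH_{5/2}(4)-\cH_{3/2}(4))}{\cH_s(4)}=\frac{115/96}{\cH_s(4)}=\phi(s,4)$. For $e\in\{3,4\}$ take $r=e-1$: if $1<s\le2$ apply Lemma~\ref{small s inequality}, whose hypothesis $s\le\min\{2,\frac{\sqrt[4]{e+5}}{\sqrt[4]{e+5}-1}\}$ is here just $s\le2$ (since $\sqrt[4]{e+5}<2$, so $\frac{\sqrt[4]{e+5}}{\sqrt[4]{e+5}-1}>2$), yielding $e_s(R)\ge 2-\tfrac{2\cH_{s-1}(4)}{\cH_s(4)}=\phi(s,4)$; if $s>2$ use $t=2$, for which $f_{4,e-1}(2)\,e=\frac{e(13-e)}{24}$ equals $\tfrac54$ or $\tfrac32$, both at least $\tfrac{115}{96}$, so $e_s(R)\ge\frac{f_{4,e-1}(2)e}{\cH_s(4)}\ge\frac{115/96}{\cH_s(4)}\ge\phi(s,4)$.

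The argument has no single hard estimate; the obstacle is structural. In dimension $3$ the maximizer of $f_{3,e-1}$ always lands in $[1,2]$ with the tidy form $\frac{\sqrt{e+2}}{\sqrt{e+2}-1}$, which is what makes that proof uniform in $e$; in dimension $4$ the analogous maximizer falls in $[1,2]$ only for $e\ge5$, so the three small multiplicities $e=2,3,4$ (where $t^\ast\in(2,\tfrac52]$ solves an unpleasant cubic) must be peeled off and treated separately with sub-optimal pivots, with Lemma~\ref{small s inequality} covering $s\le2$. Everything else --- the uniform bound $\phi(s,4)\cH_s(4)\le\tfrac{115}{96}$, the monotonicity giving $f_{4,e-1}(t^\ast)e\ge\tfrac53$ for $e\ge5$, and the inequality $(e+3)^4\ge(e+5)^3$ --- is routine, so I expect the final write-up to follow the proof of Theorem~\ref{sWY dim<4} closely, with this case split on $e\ge5$ inserted.
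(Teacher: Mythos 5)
Your proposal is correct, and for most multiplicities it takes a genuinely different route than the paper. Both arguments rest on the same three ingredients: Theorem~\ref{thm:MainLowerBound} with $r=e-1$, Lemma~\ref{small s inequality}, and the uniform estimate $2\cH_s(4)-2\cH_{s-1}(4)\le\tfrac{115}{96}$ from Lemma~\ref{difference descriptions}; both treat $e=2$ identically by taking $t=\min\{s,\tfrac52\}$. The divergence is in how the pivot $t$ is chosen for $e\ge 3$. The paper replaces the optimal pivot by three convenient constants: $t=2$ for $3\le e\le10$, $t=\tfrac32$ for $11\le e\le28$, and $t=1$ for $e\ge29$, pairing each with Lemma~\ref{small s inequality} on the small-$s$ side and verifying numerically that $e\,f_{4,e-1}(t)\ge\tfrac{115}{96}$ in each range. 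You treat $e\in\{3,4\}$ with $t=2$ (coinciding with the paper there), and then for every $e\ge5$ use the true maximizer $t^\ast=\sqrt[3]{e+3}/(\sqrt[3]{e+3}-1)\in[1,2]$, showing that $e\,f_{4,e-1}(t^\ast)=u^3(u^3-3)/(24(u-1)^3)$ with $u=\sqrt[3]{e+3}$ is increasing on $[2,\infty)$ and hence at least $\tfrac53>\tfrac{115}{96}$, while for $1<s<t^\ast$ the pivot $t=s$ reduces the inequality, after cancelling $e-2$, to $(e+3)^4\ge(e+5)^3$. Your approach is more uniform and explains structurally why the paper's ad hoc pivots work, at the cost of the cube-root optimization and the monotonicity calculation; the paper's is more elementary arithmetic. (Your explicit dismissal of $s\le1$, where Theorem~\ref{thm:MainLowerBound} cannot be applied because it needs $t\ge1$, is a point the paper leaves implicit but is worth stating.)
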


\begin{proof} The previous theorem proved the statement for rings of dimension up to 3, so let $R$ be a singular ring of dimension $4$.  By Lemma \ref{difference descriptions},  $2\cH_s(4)-2\cH_s(4)\leq  2\cH_{5/2}(4)-2\cH_{3/2}(4)=\frac{115}{96}$ for all $s>0$.

If $e=2$, then taking $t=\min\{s,\frac{d+1}{2}\}$ in Theorem \ref{thm:MainLowerBound} gives us the statement we desire.  If $3\leq e\leq 10$, then $\frac{\sqrt[4]{5+e}}{\sqrt[8]{5+e}-1}\geq 2$, and so $e_s(R)\geq 2-\frac{2\cH_{s-1}(4)}{\cH_s(4)}$ for $s\leq 2$ by Lemma \ref{small s inequality}.  On the other hand, if $s\geq 2$, then
\[e_s(R)\geq \frac{e\cH_2(4)-e(e-1)\cH_1(4)}{\cH_s(4)}=\frac{\frac{13}{24}e-\frac{1}{24}e^2}{\cH_s(4)}\geq\frac{115}{96\cH_s(4)}\geq 2-\frac{2\cH_{s-1}(4)}{\cH_s(4)}.\]

If $11\leq e\leq 28$, then  $\frac{\sqrt[4]{5+e}}{\sqrt[8]{5+e}-1}\geq \frac{3}{2}$, and so $e_s(R)\geq 2-\frac{2\cH_{s-1}(4)}{\cH_s(4)}$ for $s\leq \frac{3}{2}$ by Lemma \ref{small s inequality}.  If $s\geq \frac{3}{2}$, then
\[e_s(R)\geq \frac{e\cH_{3/2}(4)-e(e-1)\cH_{1/2}(4)}{\cH_s(4)}=\frac{\frac{78}{384}e-\frac{1}{384}e^2}{\cH_s(4)}\geq\frac{115}{96\cH_s(4)}\geq 2-\frac{2\cH_{s-1}(4)}{\cH_s(4)} .\]

If $e\geq 29$, then
\[e_s(R)\geq \frac{e\cH_{1}(4)-e(e-1)\cH_{0}(4)}{\cH_s(4)}=\frac{\frac{1}{24}e}{\cH_s(4)}\geq\frac{115}{96\cH_s(4)} \geq 2-\frac{2\cH_{s-1}(4)}{\cH_s(4)}. \qedhere\]
\end{proof}

\section*{Appendix}

Here we flesh out the computation of Example~\ref{xmp:R3}. In particular for $R_3=k[[X,Y,Z,W]]/(XY-ZW)$, we need to calculate $e_s(R_3)$. For $s\leq 1$, we have by \cite[Cor. 3.7(i)]{Tay}, that $e_s(R_3)=e(R_3)=2$. 

Set $\fm=(X,Y,Z,W)$. We start by bounding the $F$-threshold of $\fm$ with respect to itself. It suffices to show for $e \in \NN$, $\fm^{2 p^e} \subset \fm^{[p^e]}$. Let $X^aY^bZ^cW^d \in \fm^ {2p^e}$.  Without loss of generality, we may assume that $a\leq b$ and $c\leq d$.  Since $a+b+c+d\geq 2p^e$, either $a+d\geq p^e$ or $b+c\geq p^e$.  In the first case, $X^aY^bZ^cW^d=Y^{b-a}Z^{a+c}W^{a+d}\in\fm^{[p^e]}$, and the other case is similar.  Therefore the $F$-threshold of $\fm$ with respect to $\fm$ is at most 2, and hence, by \cite[Cor. 3.7(ii)]{Tay}, if $s\geq 2$ then $e_s(R_3)=\frac{e_{HK}(R_3)}{\cH_s(R_3)}=\frac{4}{3\cH_s(3)}$.

Now assume $1\leq s\leq 2$.  Using \cite[Thm. 5.4]{Tay}, we may calculate $e_s(R_3)$ as a volume in $\RR^3$.  In particular, let $e_1,e_2,e_3$ be the standard basis vectors for $\RR^3$.  The ring $R_3$ is the affine semigroup ring of the cone $\sigma^\vee\subseteq \RR^3$ generated by $\{e_1,e_2,e_3,e_1+e_2-e_3\}$. Thus, the colength $h_s(R_3) = \lambda(R_3/ \fm^{ \lceil sq \rceil} + \fm^{[q]})$ is the volume of the set 
\[U=\left\{v\in \sigma^\vee \;|\;v\notin s\cdot \Hull \fm\cup (\Exp \fm+\sigma^\vee)\right\}.\]

To justify the example, it suffices to express this volume as an integral. 

\begin{thm}
We have $$\vol(U)=2\int_0^1\cH_{s-z}(2)-z^2\cH_{1-(2-s)/z}(2)dz.$$
\end{thm}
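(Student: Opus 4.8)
The goal is to compute the Euclidean volume of the region
\[U=\left\{v\in \sigma^\vee \;\middle|\;v\notin s\cdot \Hull \fm\cup (\Exp \fm+\sigma^\vee)\right\},\]
where $\sigma^\vee\subseteq\RR^3$ is the cone on $\{e_1,e_2,e_3,e_1+e_2-e_3\}$. The natural approach is to \emph{slice} $U$ by the family of affine hyperplanes $\{v_3=z\}$ for $z\in\RR$, compute the two-dimensional area of each slice as a function of $z$, and integrate. I first want to pin down which values of $z$ contribute: since $\sigma^\vee$ lives in $\{v_1,v_2\geq 0\}$ and $\{v_1+v_2\geq v_3\}$, and we are removing the cone $\Exp\fm+\sigma^\vee$ (which, as $\fm$ is generated by the variables, is $\sigma^\vee$ shifted so that it contains no lattice point of the form $0$), the region $U$ is bounded; I expect the slices to be nonempty only for $0\le z\le 1$, after checking the geometry of $\Hull\fm$ against $\sigma^\vee$. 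That will produce the outer limits $\int_0^1(\cdots)\,dz$.

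**The two competing pieces in each slice.** For fixed $z\in(0,1)$, the slice of $\sigma^\vee$ minus the translated cone $\Exp\fm+\sigma^\vee$ is a polygon $P_z$ in the $(v_1,v_2)$-plane; I want to recognize its area as $\cH_{?}(2)$ by comparison with the defining volume formula for $\cH_s(2)=h_s(\fn,\fn)$ in a $2$-dimensional regular ring (the toric description from \cite[Thm.~5.4]{Tay}). The set $s\cdot\Hull\fm$ is a dilated simplex; intersecting its slice at height $z$ with $P_z$ gives the portion of $P_z$ that must be \emph{removed}, and I expect this to again be a scaled copy of a $2$-dimensional $\cH$-region — specifically of side proportional to $1-(2-s)/z$, which is exactly the factor appearing in the claimed integrand $z^2\cH_{1-(2-s)/z}(2)$. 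The factor $z^2$ is the Jacobian of the linear scaling of the slice; the shift by $2-s$ comes from how the hyperplane $v_1+v_2+v_3 = s\cdot(\text{something})$ cutting out $s\Hull\fm$ meets the level $v_3=z$. Putting the slice area as $\cH_{s-z}(2)-z^2\cH_{1-(2-s)/z}(2)$ and integrating over $z\in(0,1)$, with an overall factor of $2$ coming from the lattice index (equivalently, the normalized volume of the fundamental parallelepiped of the semigroup, since $R_3$ has multiplicity $2$), gives the stated formula.

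**Carrying it out.** Concretely I would: (1) write down explicit inequalities for $\sigma^\vee$, for $\Exp\fm+\sigma^\vee$, and for $s\cdot\Hull\fm$ (the latter being the convex hull of the exponent vectors of $X,Y,Z,W$, i.e.\ of $e_1,e_2,e_3,e_1+e_2-e_3$, scaled by $s$); (2) fix $z$ and intersect each with $\{v_3=z\}$, obtaining explicit polygons in $(v_1,v_2)$; (3) identify the area of $\sigma^\vee\cap\{v_3=z\}\setminus(\Exp\fm+\sigma^\vee)$ with $\cH_{s-z}(2)$ and the area of the removed bit with $z^2\cH_{1-(2-s)/z}(2)$, each by matching to the $d=2$ volume formula for $\cH$; (4) integrate in $z$ from $0$ to $1$ and multiply by the lattice-normalization factor $2$. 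The subsequent manipulation in Example~\ref{xmp:R3} — using $2\cH_s(3)=2\int_0^1\cH_{s-z}(2)\,dz$ and evaluating $2\int_{2-s}^1\tfrac12 z^2(1-(2-s)/z)^2\,dz=\tfrac13(s-1)^3=2\cH_{s-1}(3)$ — is then routine.

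**Main obstacle.** The genuinely delicate step is (3): correctly identifying the removed region and its scaling factor. One must be careful that $s\cdot\Hull\fm$ (a scaled simplex in the cone, \emph{not} a scaled copy of $\sigma^\vee$) slices to a triangle whose intersection with the slice $P_z$ of the ``translated-cone complement'' is genuinely a scaled $\cH_2$-region with parameter exactly $1-(2-s)/z$, rather than something with a different offset; getting the sign and the $2-s$ right (this is where the hypothesis $1\le s\le 2$ is used, so that $2-s\in[0,1]$ and the scaling parameter is in range) requires tracking the supporting hyperplane of $s\cdot\Hull\fm$ carefully. Keeping the lattice normalization factor consistent with the affine-semigroup-ring convention of \cite[Thm.~5.4]{Tay} is the other place to be vigilant. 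Everything else — the outer limits, the Jacobian $z^2$, the final integral — is bookkeeping.
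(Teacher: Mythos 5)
Your overall strategy — write explicit inequalities for $\sigma^\vee$, $\Exp\fm+\sigma^\vee$, and $s\cdot\Hull\fm$, slice by the third coordinate, recognize each slice as a difference of two $\cH_{\cdot}(2)$-regions, integrate — is exactly the paper's approach, and your heuristic identification of the scaling parameter $1-(2-s)/z$ and the Jacobian $z^2$ is correct. But there is a genuine error in how you account for the overall factor of $2$, and it is not a cosmetic one.

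You assert that the slices of $U$ are nonempty only for $0\le z\le 1$, and then explain the leading $2$ as ``the lattice index (equivalently, the normalized volume of the fundamental parallelepiped of the semigroup, since $R_3$ has multiplicity $2$).'' Both claims are wrong. The cone $\sigma^\vee$ is cut out by $x\ge 0$, $y\ge 0$, $x+z\ge 0$, $y+z\ge 0$ (not by $v_1+v_2\ge v_3$), so it extends to arbitrarily negative $z$; after removing $s\cdot\Hull\fm$ and $\Exp\fm+\sigma^\vee$, the slices of $U$ are nonempty precisely for $-1<z<1$. Moreover the affine semigroup of $R_3$ is $\sigma^\vee\cap\ZZ^3$ with the standard lattice $\ZZ^3$ — every lattice point of $\sigma^\vee$ is a nonnegative integer combination of $e_1,e_2,e_3,e_1+e_2-e_3$ — so there is no fundamental-parallelepiped normalization to be applied; the volume in \cite[Thm.~5.4]{Tay} is the ordinary Lebesgue volume. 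The factor of $2$ actually arises because the slice at $z<0$ has, after the shear $x\mapsto x+z$, $y\mapsto y+z$, exactly the same area as the slice at $-z>0$; so $\vol(U)=\int_{-1}^{1}(\text{slice area})\,dz = 2\int_0^1(\text{slice area})\,dz$. If you carried out your program with the limits $\int_0^1$ and then multiplied by a spurious lattice index, you would reach the stated formula, but for the wrong reason; and the same mistaken reasoning would give an incorrect answer in any example where the negative-$z$ slices are not mirror images of the positive-$z$ ones. You should replace the lattice-index justification with the explicit $z\leftrightarrow -z$ symmetry argument (perform the change of variables $x'=x+z$, $y'=y+z$ on the $z<0$ slice and match the resulting inequalities to those of the $-z>0$ slice).

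Two smaller points worth tightening: your description of which part of the slice gives $\cH_{s-z}(2)$ versus $z^2\cH_{1-(2-s)/z}(2)$ is not quite the right decomposition — the region with area $\cH_{s-z}(2)$ is cut out by $0\le x,y\le 1$ (conditions from $\sigma^\vee$ and from avoiding part of $\Exp\fm+\sigma^\vee$) \emph{together with} $x+y\le s-z$ (from avoiding $s\cdot\Hull\fm$), and the subtracted piece $z^2\cH_{1-(2-s)/z}(2)$ is the further excision $\{x\ge 1-z,\,y\ge 1-z\}$ required to lie outside the remaining translate of $\sigma^\vee$. Also $(x,y,z)\in s\cdot\Hull\fm$ is equivalent simply to $x+y+z\ge s$; establishing that cleanly (via the parametrization $v=\sum a_i g_i$ with $\sum a_i\ge s$) is what makes the $s-z$ and $2-s$ offsets come out right, and it uses $1\le s\le 2$ as you anticipated.
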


\begin{proof}
To show this, we determine a collection of inequalities on the coordinates which identify $v=(x,y,z)\in \sigma^\vee$. In particular, we claim that $(x,y,z)\in \sigma^\vee$ if and only if $x,y,x+z,y+z\geq 0$

To see this, note for any $v=(x,y,z)\in \sigma^\vee$ there exist $a_i\in \RR_{\geq 0}$ such that $v=a_1e_1+a_2e_2+a_3e_3+a_4(e_1+e_2-e_3)$, and so $x=a_1+a_4$, $y=a_2+a_4$, and $z=a_3-a_4$.  This  implies that $x,y,x+z,y+z\geq 0$, and further that $x+y+z=a_1+a_2+a_3+a_4$.  On the other hand, suppose  $x,y,x+z,y+z\geq 0$.  If $z\geq 0$, then we may take $a_1=x$, $a_2=y$, $a_3=z$, and $a_4=0$ to see that $(x,y,z)\in \sigma^\vee$.  If $z<0$, then we may take $a_1=x+z$, $a_2=y+z$, $a_3=0$, and $a_4=-z$ to realize the same.  Note this also gives a condition for membership in $s\cdot\Hull\fm =\{a_1e_1+a_2e_2+a_3e_3+a_4(e_1+e_2-e_3)\;|\;a_i\in \RR_{\geq 0}, \sum a_i\geq s\}$, in particular, 
\[v=(x,y,z)\in s\cdot\Hull \fm\Leftrightarrow x+y+z=a_1+a_2+a_3+a_4\geq s.\]

It suffices now to analyze $\Exp \fm+\sigma^\vee$. To this end, we claim $v=(x,y,z)\in e_1+\sigma^\vee$ if and only if $(x-1,y,z)\in \sigma^\vee$, and so this occurs precisely when $x-1,y,x+z-1,y+z\geq 0$.  Similar arguments hold for $e_2+\sigma^\vee$ and $e_3+\sigma^\vee$.  For $v\in(e_1+e_2-e_3)+\sigma^\vee$, we wish to know when $v-(e_1+e_2-e_3)=(x-1,y-1,z+1)\in \sigma^\vee$, which occurs precisely when $x-1,y-1,x+z,y+z\geq 0$.  Thus we have that
\[v=(x,y,z)\in \Exp\fm+\sigma^\vee \Leftrightarrow \left\{\begin{array}{l} x-1,y,x+z-1,y+z\geq 0, \text{ or }\\ x,y-1,x+z,y+z-1\geq 0,\text{ or }\\ x,y,x+z-1,y+z-1\geq 0,\text{ or }\\x-1,y-1,x+z,y+z\geq 0\end{array}\right\}\] It is now routine to establish that $U$ consists of the set of points $v = (x,y,z)$ such that 
\begin{enumerate}
\item $x,y,x+z,y+z\geq 0$, and 
\item $x+y+z\leq s$, and 
\item either $x<1$ and $y+z<1$, or $y<1$ and $x+z<1$.
\end{enumerate}


To calculate the volume of $U$, we integrate over the one of the coordinates. For a fixed nonnegative value of $z$, the points $(x,y)$ such that $(x,y,z)\in U$ are the points such that
\begin{enumerate}
\item $0\leq x,y\leq 1$, and
\item $x+y\leq s-z$, and
\item it is not the case that both $x\geq 1-z$ and $y\geq 1-z$.
\end{enumerate}
From condition (3) we see that we may assume that $z\leq 1$, and we do so.  The volume of points satisfying the first two conditions is exactly $\cH_{s-z}(2)$.  The points that satisfy the first two conditions but fail the second can be written in the form $(1-z+\alpha,1-z+\beta)$, where $0\leq \alpha,\beta\leq z$ and $\alpha +\beta \leq s-z-(2-2z)=z-(2-s)$.  The volume of these points is $z^2\cH_{1-(2-s)/z}(2)$. Now fixing a negative value of $z$, the points $(x,y)$ such that $(x,y,z)\in U$ are the points such that
\begin{enumerate}
\item $-z\leq x,y\leq 1-z$, and
\item $x+y\leq s-z$, and
\item it is not the case that both $x\geq 1$ and $y\geq 1$.
\end{enumerate}
Setting $x'=x+z$ and $y'=y+z$, the volume of the points $(x,y)$ satisfying the above conditions is the  same as the volume of the points $(x',y')$ satisfying
\begin{enumerate}
\item $0\leq x,y\leq 1$, and
\item $x+y\leq s+z$, and
\item it is not the case that both $x'\geq 1+z$ and $y'\geq 1+z$.
\end{enumerate}
Thus these points have exactly the same volume as those in the case where $z$ is nonnegative. 

\end{proof}


\end{document}